\numberwithin{equation}{section}
\newcommand{\close}[1]{\overline{#1}}
\newcommand{\interior}[1]{#1^\mathrm{o}}
\newcommand\restr[2]{{%
  \left.\kern-\nulldelimiterspace %
  #1 %
  \vphantom{\big|} %
  \right|_{#2} %
  }}
\newcommand{\spt}{\operatorname{spt}}
\newcommand{\proj}{\operatorname{proj}}
\newcommand{\productofmarginals}{m}
\newcommand{\shiftedkpairsset}[1]{\overrightarrow{#1}}
\newcommand*\bigcdot{\mathpalette\bigcdot@{.5}}
\newcommand*\bigcdot@[2]{\mathbin{\vcenter{\hbox{\scalebox{#2}{$\m@th#1\bullet$}}}}}
\renewcommand{\cdot}{\bigcdot}
\begin{document}   

\title[Large deviations for scaled Schr\"odinger bridges with reflection]{Large deviations for scaled families of Schr\"odinger bridges with reflection}

\author[V.~Nilsson]{Viktor Nilsson}
\author[P.~Nyquist]{Pierre Nyquist}
\address[V.~Nilsson, P.\ Nyquist]{KTH Royal Institute of Technology}
\address[P.\ Nyquist]{Chalmers University of Technology and University of Gothenburg}
\curraddr{Department of Mathematics, KTH, 100 44 Stockholm, Sweden}
\email{{vikn@kth.se}, {pnyquist@chalmers.se}}
\thanks{}

\subjclass[2020]{60F10; secondary 49Q22, 93C10}%

\keywords{Large deviations, Schr\"{o}dinger bridges, entropic optimal transport, reflected diffusions, generative modeling}

\begin{abstract} 

In this paper, we show a large deviation principle for certain sequences of static Schr\"{o}dinger bridges, typically motivated by a scale-parameter decreasing towards zero, extending existing large deviation results to cover a wider range of reference processes. 
Our results provide a theoretical foundation for studying convergence of such Schr\"{o}dinger bridges to their limiting optimal transport plans. 
Within generative modeling, Schr\"{o}dinger bridges, or entropic optimal transport problems, constitute a prominent class of methods, in part because of their %
computational feasibility in high-dimensional settings. Recently, Bernton, Ghosal, and Nutz \cite{bernton2022entropic} established a large deviation principle, in the small-noise limit, for fixed-cost entropic optimal transport problems. %
In this paper, we address an open problem posed in \cite{bernton2022entropic} and extend their results to hold for %
Schr\"{o}dinger bridges associated with certain sequences of more general reference measures with enough regularity in a similar small-noise limit. 
These can be viewed as sequences of entropic optimal transport plans with non-fixed cost functions. 
Using a detailed analysis of the associated Skorokhod maps and transition densities, we show that the new large deviation results cover Schr\"{o}dinger bridges where the reference process is a reflected diffusion on a bounded convex domain, corresponding to recently introduced model choices in the generative modeling literature.

\end{abstract}   

\maketitle

\section{Introduction}
\label{sec:intro}
\emph{Optimal transport} (OT) theory has come to play a central role in mathematics, bridging areas such as statistical physics, probability theory, analysis, and geometry; see, e.g., \cite{Vil09, rachev2006a, rachev2006b} and references therein. 
Recent interest in the area has been greatly spurred on by computational advances, where OT is now used to design and analyze methods for high-dimensional problems in machine learning and statistics --- the monograph \cite{peyre2019} contains numerous examples and references. 
A key element is the use of entropic regularization in the OT problem, thus studying \emph{entropic optimal transport} (EOT). 
This problem can be phrased as follows --- for ease of exposition, we here phrase the problem on $\bR ^d$ (see Section \ref{sec:SBs-and-EOT} for a more general definition and details): given a continuous cost function $c:\bR ^d \times \bR ^d \to \bR$ and two probability measures $\mu, \ \nu$ on $\bR ^d$, the EOT problem is, for $\varepsilon >0$,
\begin{align}
\label{eq:EOT}
    \inf _{\pi \in \Pi (\mu, \nu)} \int c \, d\pi + \varepsilon \calH (\pi \mid \mid \mu \times \nu),
\end{align}
where $\calH (\cdot \mid \mid \mu \times \nu )$ is the relative entropy with respect to the product measure $\mu \times \nu$ and $\Pi (\mu, \nu)$ is the set of couplings between $\mu$ and $\nu$. 
Considering the EOT problem as opposed to the original OT problem between $\mu$ and $\mu$, i.e., \eqref{eq:EOT} with $\varepsilon = 0$, enables the use of Sinkhorn's algorithm \cite{cuturi2013}, which in turn opens for applications in large-scale computational problems.  

From a theoretical perspective, the EOT problem can be phrased as a \emph{static Schr\"{o}dinger bridge} (SB) \emph{problem} for the same marginals $\mu$ and $\nu$. 
Dating back to work by Schr\"{o}dinger \cite{schrodinger1932theorie}, the static SB problem for $\mu$ and $\nu$, amounts to finding the coupling $\pi$ of $\mu$ and $\nu$ such that it minimizes the relative entropy with respect to some reference measures $R \in \mathcal{P} (\bR ^d \times \bR ^d)$; see Section \ref{sec:SBs-and-EOT} for details. %
The \textit{dynamic} version of the problem amounts to finding a stochastic process on $[0,1]$, identified through its path measure $\pi$, such that $\pi_0 = \mu$ and $\pi_1 = \nu$, while also minimizing $\calH(\cdot \mid\mid R)$ over all path measures that satisfy the marginal constraints (i.e., at times $0$ and $1$), where the reference measure $R$ is then also a measure on path space; we abuse terminology slightly and refer to this both as a reference measure and a reference \textit{dynamics or process}, as in the stochastic process governed by the path measure. 
The static problem can thus be retrieved from the dynamic formulation by projecting onto the marginals at times $0$ and $1$.
The link between EOT and SB problems is that a specific choice of cost function $c$ in the EOT setting, or equivalently, a specific choice of reference measure $R$ in the SB problem, makes the two problems equivalent.

In applications, the EOT problem \eqref{eq:EOT} is often used to replace the corresponding OT problem because the former can be efficiently solved numerically. 
As the aim is to have a good approximation to the solution of the OT problem, it is important to understand the convergence of the solution of the EOT problem as the regularization parameter $\varepsilon$ goes to zero.
In the pioneering work \cite{mikami2004monge}, Mikami proves the foundational result that the solution of the (dynamic) SB problem with a scaled Brownian reference process converges weakly to the OT plan between the given marginals $\mu$ and $\nu$ as $\varepsilon$, the noise-scale of the Brownian motion, goes to zero. 
L\'eonard extends this result in \cite{leonard2012schrodinger} to cover SBs with a sequence of general reference measures, where this sequence itself satisfies a certain convergence criterion, giving a general convergence result for SBs; these results are also for the dynamic setting. %
See also \cite{leonard2014survey} for a heuristic overview of L\'{e}onard's results in terms of what is there referred to as ``slowed down'' processes.

Although Mikami and L\'{e}onard study the convergence of the optimizers in the EOT setting, the convergence results in \cite{mikami2004monge, leonard2012schrodinger} do not provide any details about the \textbf{rate} of convergence. 
To address this, amongst other problems, in \cite{bernton2022entropic}, Bernton, Ghosal and Nutz establish a \emph{large deviation principle} (LDP) for the EOT problem in a small-noise limit. 
By the equivalence mentioned earlier, this also establishes an LDP for certain sequences of static SB problems. 
The results rely on studying the geometry of the optimizers in the EOT problem using the notion of $c$-cyclical monotonicity (see Section \ref{sec:large-deviations}). 
In \cite{Kato24}, Kato answers an open question posed in \cite{bernton2022entropic} and proves an analogous large deviation result in the setting of dynamic SBs with a (scaled) Brownian reference process.

From the perspective of SBs, the results in \cite{bernton2022entropic} apply only to the specific setting where the range of SB problems corresponding to different noise-scales $\varepsilon$ all can be represented as EOT problems with a common cost function $c$. %
The results in \cite{Kato24} consider the even more particular setting where the reference process must be a standard Brownian motion, which translates into $c$ in the corresponding EOT to be the quadratic cost function $c(x,y) = \frac{1}{2} |x-y| ^2$.

In this paper, we address a second open problem mentioned in \cite{bernton2022entropic}: establish an LDP analogous to that of \cite{bernton2022entropic} for more general sequences of reference measures $(R_\eta) _{\eta > 0}$. 
Here, $\eta$ is again viewed as a noise-scale in the reference process and we are interested in the limit as $\eta$ goes to zero. 
To align with the results in \cite{bernton2022entropic}, we formulate the general problem as a sequence of EOT problems where the cost function $c$ is no longer fixed but instead allowed to vary with the regularization parameter. 

Specifically, we provide a partial answer to the problem posed in \cite{bernton2022entropic}, proving an LDP for the optimizers in the sequence of SB problems in the case where the sequence of cost functions converges uniformly (in an appropriate sense) as the noise-scale goes to zero. 
The result is presented in Theorem \ref{thm:weak-type-LDP}. As a demonstration of the potential use of this theorem, we show that this generalization can be used to prove a large deviation principle for SBs where the reference process is a reflected Brownian motion. 
Whereas the extension of the results in \cite{bernton2022entropic} to cover the case of uniformly convergent cost functions is a straightforward modification of the arguments used for a common cost function, proving that the new result holds for the setting of a reflected reference process requires a detailed analysis of the Skorokhod maps and transition densities involved (see Section \ref{sec:reflected-SB}). 

The extension of the large deviation results in \cite{bernton2022entropic}, aside from their mathematical interest within optimal transport theory, is motivated by recent developments in generative modeling.  
The current wave of innovation in image-space generative models can largely be attributed to models using iterative refinement.
This is usually modeled by a continuous-time stochastic process that `connects' two probability distributions, often a structured data distribution and an unstructured noisy distribution.
Early work that explicitly used this framework suggested neural ODEs learned by likelihood training \cite{chen2018neural, grathwohl2018ffjord}. 
Seminal works in the field have used a denoising score-matching \cite{vincent2011connection, song2019generative} or diffusion model \cite{sohl2015deep, ho2020denoising} formulation, where an increasing sequence of noise-scales $\{ \sigma_t \}$ is used to corrupt the data, whereas a denoising neural model learns to remove this noise.
The increasing (in $t$) noise-scale sequence can be viewed as corresponding to the time variable $t$ of a stochastic differential equation (SDE) that continuously adds noise, and comes close in law to some normal distribution for a large enough $t$. %
Meanwhile, the denoiser learns to model the drift of the reversed SDE, which is used to generate new structured data.
This connection to SDEs was first explicitly made in \cite{song2020score}.

Another line of work in generative modeling, known as \textit{diffusion Schr\"{o}dinger bridges}, were introduced in \cite{bortoli2021diffusion, wang2021deep, vargas2021solving}. Diffusion SBs are attempts to model the true SB between two distributions with deep networks, by learning a target inspired by score-matching.
This directly couples the distributions, instead of relying on being approximately normal for large $t$.
Further, it allows both endpoints to have structure, which can be utilized for applications such as image to image translation \cite{shi2023diffusion}, and image restoration \cite{liu2023I2ISB}.
As these models rely on the dynamic SB problem, they require specifying a reference dynamics, often a scaled Brownian motion or Ornstein-Uhlenbeck process.
Taking this noise-scale to zero leads to the \emph{flow-matching}~\cite{lipman2022flow, albergo2022building, liu2022flow}~framework, where one learns a deterministic ODE that couples the two distributions.

Flow-matching models and diffusion SBs have strong intrinsic connections with OT problems. 
For flow matching, the simplest and most common choice of conditional vector field is the \emph{conditional OT} choice, consisting of straight transport paths \cite{lipman2022flow}. 
In \cite{liu2022flow}, under the name \emph{rectified flows}, it is shown that with this choice of conditional vector field, if the fitting procedure is iterated, the model converges in the limit to a \emph{dynamic} OT plan. %
In \cite{shi2023diffusion}, Shi et al.\ show how SBs can be learned by an iterative scheme similar to that of \cite{liu2022flow}, which then turns into the rectified flows of \cite{liu2022flow} in the low-noise limit.

In other works, such as \cite{pooladian2023multisample, tong2023improving}, the authors instead use OT couplings from mini-batches from the marginals, and then match the conditional vector fields produced by those couplings, which heuristically produces OT-like behavior.

The success and strengths of the different models are highly coupled with their OT characteristics. 
Empirically, this has been demonstrated by the proximity of generated samples to their origin \cite{bortoli2021diffusion, shi2023diffusion}, and the models' abilities to generate samples via straight, non-intersecting paths \cite{liu2022flow}, which may speed up the numerical solution of the generative ODE or SDE. 
Because of this, for models that can be phrased in terms of SBs, it is therefore of great interest to better understand the convergence to the limiting OT plan, when there is enough regularity in the problem, as the noise-scale goes to zero. 
This is precisely the role of the type of large deviation results proved here and in \cite{bernton2022entropic}. 
More specifically, from the point of view of applications, the results of this paper extend existing large deviation results to cover the newly established \emph{reflected Schr\"{o}dinger Bridges} \cite{caluya2021reflected, deng2024reflected}, a family of generative models for constrained domains which can be seen as the counterpart of reflected diffusion models \cite{lou2023reflected}.

The remainder of the paper is organized as follows. 
In Section \ref{sec:prel}, we provide some preliminaries on EOT and static SBs (Section \ref{sec:SBs-and-EOT}), dynamic SBs (Section \ref{sec:dynamic}), and large deviations (Section \ref{sec:large-deviations}), specifically reviewing the existing results for SBs in the latter. In Section \ref{sec:small-noise-sdes-rsdes-SBs} we introduce a family of SB problems with reference measures associated with certain small-noise diffusions. We illustrate how, when choosing the parameters involved in a certain way, this corresponds to a family of EOT problems with non-constant (in the parameter $\eta$) cost functions $\{ c_\eta \} _{\eta > 0}$. In Section \ref{sec:ldp-uniform-convergent-cost} we generalize the existing results on LDPs for EOT problems to the setting of such a non-constant family of costs $\{c_\eta\}_{\eta > 0}$ under a uniform convergence condition; the main result is Theorem \ref{thm:weak-type-LDP}, which is the analogous result to Theorem 1.1 in \cite{bernton2022entropic} (see Theorem \ref{thm:BGN22}). 
In Section \ref{sec:small-noise-reflected-browian-motion}, we provide some background on reflected diffusions and the corresponding SB problems. We then show that the condition of uniform convergence holds for the costs associated with reflected Brownian reference processes on smooth bounded domains, resulting in Theorem \ref{thm:reflected-ceta-uniform-convergence}. 
This establishes an LDP for the corresponding family of SBs.

\section{Preliminaries}
\label{sec:prel}
\subsection{Static Schr\"{o}dinger bridges and entropic optimal transport}\label{sec:SBs-and-EOT}

Starting from the rather informal and high-level discussion in Section \ref{sec:intro}, here we provide a more general, albeit brief, introduction to the static SB problem, EOT, and their connection. 

Throughout the paper, $\calX$ and $\calY$ are two Polish spaces (we will later take them to be subsets of $\bR ^d$), $c: \calX \times \calY \to \bR$ is a cost function, and $\mu$ and $\nu$ are elements of $\calP (\calX)$ and $\calP (\calY)$, the set of probability measures on $\calX$ and $\calY$, respectively. 
For $\varepsilon >0$, an $\varepsilon$-regularized EOT plan, or problem, is defined as
\begin{equation}\label{eq:EOT-def}
   \inf _{\pi \in \Pi(\mu, \nu)} \int_{\mathcal{X} \times \mathcal{Y}} c(x, y) \pi(dx, dy) + \varepsilon \calH(\pi \mid\mid \productofmarginals),
\end{equation}
where $\productofmarginals \coloneq \mu \times \nu$, $\Pi(\mu, \nu)$ denotes the set of couplings between $\mu$ and $\nu$, i.e., the set of measures $\pi \in \calP(\calX \times \calY)$ with marginals $\mu$ and $\nu$ (meaning that $(\proj_\calX)_\# \pi = \mu$ and $(\proj_\calY)_\# \pi = \nu$), %
and $\calH$ denotes the \emph{relative entropy}, or Kullback-Liebler (KL) divergence, 
\begin{align*}
\calH(\bP \mid\mid \bQ) = \begin{cases} \bE^\bP\left[ \log \frac{d\bP}{d\bQ} \right], & \textrm{if } \bP \ll \bQ, \\
\infty, & \textrm{otherwise}.
\end{cases}
\end{align*}
Taking $\varepsilon = 0$ in \eqref{eq:EOT-def} yields the original (OT) plan/problem associated with $\mu$ and $\nu$:
\begin{equation}\label{eq:OT-def}
    \inf_{\pi \in \Pi(\mu, \nu)} \int_{\mathcal{X} \times \mathcal{Y}} c(x, y) \pi(dx, dy).
\end{equation}
As mentioned in the introduction, the EOT problem can be viewed through the lens of SBs. 
For a \emph{reference measure} $R \in \calP(\calX \times \calY)$ the static SB problem between $\mu$ and $\nu$ with respect to $R$ is defined as the solution to 
\begin{equation}\label{eq:SB-def}
    \inf _{\pi \in \Pi(\mu, \nu)} \calH(\pi \mid\mid R).
\end{equation}
In \eqref{eq:EOT-def}-\eqref{eq:SB-def}, there is a question of whether or not the problems are well-posed and if there exists a (unique) minimizer. 
To address this, throughout the paper we make the following standard assumption (see, e.g., \cite{bernton2022entropic, Kato24}).
\begin{assumption}\label{ass:finite-plan-exists}
    In the static Schr\"{o}dinger problem, for all considered combinations of reference measures $R$ and marginals $\mu, \ \nu$, there exists at least one $\pi \in \Pi(\mu, \nu)$ such that $\calH(\pi \mid\mid R) < \infty$. Similarly, for the EOT problem, for any considered cost function $c$ and marginals $\mu, \ \nu$, there exists at least one $\pi \in \Pi(\mu, \nu)$ such that
    $\int_{\mathcal{X} \times \mathcal{Y}} c \, d\pi + \varepsilon \calH(\pi \mid\mid \productofmarginals) < \infty$.
\end{assumption}
Under Assumption \ref{ass:finite-plan-exists}, we have, by the the strict convexity of $\calH(\cdot \mid\mid \productofmarginals)$ \cite[Lemma~2.4b]{BD19} and compactness of $\Pi(\mu, \nu)$ \cite[Lemma~4.4]{Vil09}, that %
the SB problem \eqref{eq:SB-def} is guaranteed to have a well-defined minimizer, denoted $\pi^R$.
This is analogously true for the EOT problem \eqref{eq:EOT-def} with $\varepsilon > 0$, as the objective function is a sum of a linear and a strongly convex term; we denote the corresponding minimizer by $\pi _\varepsilon ^c$.
The OT problem \eqref{eq:OT-def} also has a minimizer under Assumption \ref{ass:finite-plan-exists}; however, uniqueness may not necessarily hold.

As alluded to in Section \ref{sec:intro}, if $R$ is absolutely continuous with respect to $\productofmarginals$, $R \ll \productofmarginals$, 
the SB problem \eqref{eq:SB-def} is equivalent to the $\varepsilon$-regularized EOT problem \eqref{eq:EOT-def} for a specific cost.
To see this, for a given $\varepsilon >0$, define 
\begin{equation}\label{eq:c_epsilon}
    c^\varepsilon(x,y) \coloneq -\varepsilon \log \frac{d R}{d \productofmarginals}(x,y).
\end{equation}
Inserting this $c^\varepsilon$ into \eqref{eq:EOT-def} 
leads to the two problems \eqref{eq:EOT-def} and \eqref{eq:SB-def} having the same minimizer,  $\pi^{c^\varepsilon} _\varepsilon = \pi^R$; note that $\pi ^{c^\varepsilon} _\varepsilon$ does not depend on $\varepsilon$ in this case.
Conversely, for any $\varepsilon$-regularized EOT problem with cost function $c$, define the measure $R_\varepsilon$ via
\begin{equation}\label{eq:R_epsilon}
    \frac{d R_\varepsilon}{d \productofmarginals} \coloneq Z_\varepsilon^{-1} e^{-\frac{1}{\varepsilon} c(x,y)},
\end{equation}
where $Z_\varepsilon$ is the normalizing constant. 
Taking this $R _\varepsilon$ as the reference measure in \eqref{eq:SB-def} produces a (static) SB problem that is equivalent to the EOT problem with cost function $c$, and the solution $\pi^{R_\varepsilon} = \pi^c_\varepsilon$.

\subsection{Dynamic Schr\"{o}dinger bridges}
\label{sec:dynamic}
Unsurprising, the study of Schr\"{o}dinger bridges in mathematics and physics goes back to Schr\"{o}dinger \cite{schrodinger1932theorie}, who conceived them as the answer to the question: 
What is the most likely way for a large number of non-interacting particles to evolve into a specified distribution at some fixed time $T > 0$?
The connection between this question and the SB problem \eqref{eq:SB-def} is perhaps easiest seen in the \emph{dynamic} formulation of the SB problem.

We begin with some notation. 
Take $\calC_1$ to be the path space $C([0,1]: \bR^d)$, i.e. the space of continuous functions from $[0,1]$ to $\bR ^d$, equipped with its Borel $\sigma$-algebra $\calB(\calC_1)$; $\calP (\calC _1)$ is the space of probability measures on $\calC_1$. Here we also take $\calX$ and $\calY$ to be subsets of $\bR^d$. 
For $t_1, ..., t_n \in [0, 1]$, let $\proj_{t_1, ..., t_n}: \calC_1 \to (\bR^d)^n$ be the projection $f \mapsto (f(t_0), ..., f(t_n)) \in (\bR^d)^n$, and, for a measure $\pi$ on $\calC_1$, let $\pi_{t_1 \, ... \, t_n} \coloneq ({\proj_{t_1, ..., t_n}})_\# \pi$. 
For $\mu, \nu \in \calP (\bR ^d)$, we say that a measure $\pi$ on $\calC_1$ is a \emph{path space coupling} between $\mu$ and $\nu$ if $\pi_0 = \mu$ and $\pi_1 = \nu$. 

In the dynamic SB problem, the reference measure $R$ is a measure on path space, i.e., here $R \in \calP(\calC_1)$. 
Let $\Pi^{\calC_1}(\mu, \nu)$ denote the set of path space couplings between $\mu$ and $\nu$,
\[
 \Pi^{\calC_1} (\mu, \nu) \coloneq \left\{ \pi \in \calP (\calC _1): \pi_0 = \mu, \ \pi_1 = \nu \right\}. %
\] 
The \emph{dynamic Schr\"{o}dinger bridge} with respect to $R$, $\mu$, and $\nu$ is given by 
\begin{equation}\label{eq:dynamic-SB-def}
    \hat{\pi} = \argmin_{\pi \in \Pi^{\calC_1}(\mu, \nu)} \calH(\pi \mid\mid R).
\end{equation}
In comparing the dynamic and static formulations, %
note that $R$ can be represented through the disintegration $R = R_{01} \otimes R^{\cdot}$, %
where $R^{\cdot}$ is an appropriate stochastic kernel, so that $R^{xy}$ is a path measure conditional on the endpoints $(x,y)$.
Moreover, for any measure $\pi \in \Pi^{\calC_1}(\mu, \nu)$, $\pi = \pi_{01} \otimes \pi^{\cdot}$.
Then, because $\pi \ll R$ implies that $\pi_{01} \ll R_{01}$ and $\pi^{xy} \ll R^{xy}$ hold $R_{01}$-$\mathrm{a.s.}$, we have that whenever $\frac{d\pi}{dR}(X)$ exists
\[
\frac{d\pi}{dR}(X) = \frac{d\pi_{01}}{dR_{01}}(X_0, X_1) \frac{d\pi^{X_0 X_1}}{dR^{X_0 X_1}}(X), \ \  R-\mathrm{a.s.},
\]
(it also holds $\pi$-$\mathrm{a.s.}$). 
From this, we have
\begin{equation}
\begin{split}
    \calH (\pi \mid\mid R) &= \bE ^{\pi} \left[ \log \frac{d \pi _{01} }{d R_{01}} (X_0, X_1) \right] + \bE ^{\pi} \left[ \log \frac{d \pi^{X_0 X_1} }{d R^{X_0 X_1}} (X) \right] \\
    &= \calH (\pi _{01} \mid\mid  R_{01}) + \int _{(\bR ^d)^2} \calH (\pi^{xy} \mid\mid R^{xy}) \pi _{01} (dx, dy).
\end{split}
\end{equation}
In cases where $\frac{d\pi}{dR}(X)$ does not exist, we have $\calH(\pi \mid \mid R) = \infty$. 
For $\pi _{01}$-a.s.\ every $(x,y) \in (\bR ^d)^2$, $\calH (\pi^{xy} \mid\mid R^{xy})$ is non-negative and zero iff $\pi^{xy} = R^{xy}$. Thus, the dynamic SB problem \ref{eq:dynamic-SB-def} amounts to the following static SB problem:
\begin{equation}
    \hat{\pi}_{01} \coloneq \argmin _{\pi_{01} \in \Pi(\mu, \nu)} \calH (\pi_{01} \mid\mid R_{01}),
\end{equation}
combined with interpolating the plans by $R^{\cdot}$, the bridge processes associated with $R$.
That is, the dynamic SB, the solution to \eqref{eq:dynamic-SB-def}, is given by the composition of  the static SB ($
\hat{\pi}_{01}$) with the conditional path process of $R$ ($R^{\cdot}$): $\hat{\pi} = \hat{\pi}_{01} \otimes R^{\cdot}$.

\subsection{Large deviations for EOT and static SB problems}
\label{sec:large-deviations}
In this section, we present the large deviation results shown in \cite{bernton2022entropic} for EOT problems, as the regularization parameter vanishes. We begin with a brief reminder of the concept that is at the heart of the theory of large deviations: the \textit{large deviation principle} (LDP). 
A sequence of probability measures $\{\gamma_\kappa\}_{\kappa > 0}$ on a Polish space $S$ is said to satisfy an LDP with %
\emph{rate function} $I: S \to [0, \infty]$, and speed $\kappa^{-1}$ if $I$ is lower semi-continuous and if for every Borel set $A \subseteq S$, the following inequalities hold:
\begin{align*}\label{eq:LDP-def}
    -\inf_{x \in \interior{A}} I(x) &\leq \liminf_{\kappa \downarrow 0} \kappa \log \gamma _\kappa(A) \\
    &\leq \limsup_{\kappa \downarrow 0} \kappa \log \gamma _\kappa (A) \leq -\inf_{x \in \close{A}} I(x),
\end{align*}
where $\interior{A}$ and $\close{A}$ denote the interior and closure of $A$, respectively; the definition can be made for much more general spaces than Polish, but for the purpose of this discussion, there is no need for such generalities. 
The rate function $I$ is said to be a \emph{good} rate function if the sub-level sets $I^{-1}([0,a])$, $a \geq 0$, are compact; see, e.g., \cite{Dembo98, BD19} and references therein for more details about large deviation theory in general.

The gist of the inequalities above is that they describe the exponential decay of probabilities under $\gamma _\kappa$ as $\kappa$ vanishes. 
In a rough sense, for an event $A \subseteq S$ and $\kappa$ small, %
\[
    \gamma _\kappa (A) \approx \exp \{-\kappa^{-1} \inf _{x\in A} I(x)\}.
\]
The definition of an LDP makes this approximation precise in the limit as $\kappa \to 0$. 
In particular, this implies that for the probability of an event $A$ not to vanish in this limit, it must hold that $\inf_{x \in A} I(x) = 0$. %
In the same vein, for any $A$, the set of elements $x \in A$ for which $I(x) \approx \inf_{x' \in A} I(x')$ is (asymptotically) the overwhelmingly most likely way $A$ can occur; this statement can be made rigorous, typically referred to as a Gibbs principle (see \cite{Dembo98, BD19}).

In the context of SBs and EOT, the  large deviation behavior of the small noise limit has only recently been studied in \cite{bernton2022entropic, Kato24}. 
Specifically, in \cite{bernton2022entropic}, Bernton, Ghosal and Nutz show the first LDP associated with a sequence of EOT problems as the regularization parameter vanishes. 
To review their results, we consider again the EOT problem \eqref{eq:EOT-def} for a given cost function and marginals $\mu \in \calP (\calX)$, $\nu \in \calP (\calY)$, 
\begin{align*}
   \inf _{\pi \in \Pi(\mu, \nu)} \int_{\mathcal{X} \times \mathcal{Y}} c(x, y) \pi(dx, dy) + \varepsilon \calH(\pi \mid\mid \productofmarginals), \ \ \productofmarginals = \mu \times \nu.
\end{align*}
In \cite{bernton2022entropic} the authors adopt a geometric point of view and a central tool used in their proofs is \textit{cyclical monotonicity} associated with the cost function $c$. 
\begin{definition}\label{def:cm}
    A subset $\Gamma \subseteq \calX \times \calY$ is called \textit{$c$-cyclically monotone} if
    \begin{equation}\label{eq:cyclical-monotonicity}
        \sum_{i=1}^k c(x_i, y_i) \leq \sum_{i=1}^k c(x_i, y_{i+1}),
    \end{equation}
    for all $k \in \bN$, $\{ (x_i, y_i ) \} _{i=1} ^k \subseteq \Gamma$, where $y_{k+1}$ is interpreted as $y_1$. 
    Correspondingly, a transport plan $\pi \in \Pi (\mu, \nu)$ is called cyclically monotone if $\pi(\Gamma) = 1$ for some cyclically monotone set $\Gamma$.
\end{definition}%

Under Assumption \ref{ass:finite-plan-exists}, the EOT problem above has a unique minimizer $\pi ^c _\ve \in \Pi (\mu, \nu)$. 
Moreover, in \cite[Proposition 2.2]{bernton2022entropic} it is shown that $\pi ^c _\ve$ can be the unique solution to the EOT problem if and only if it satisfies \textit{cyclical invariance} with respect to $c$ (and for the value $\ve$).
\begin{definition}
\label{def:ci}
An element $\pi \in \Pi (\mu, \nu)$ is called \textit{$(c,\ve)$-cyclically invariant} if $\pi$ is equivalent to $\productofmarginals$ and there is a version of the Radon-Nikodym derivative $d\pi/d\productofmarginals$ such that
\begin{align}
\label{eq:cyclical-invariance}
    \prod_{i=1}^k \frac{d \pi }{d \productofmarginals}(x_i, y_i) = \exp \left\{ -\frac{1}{\ve}\left(\sum_{i=1}^k c(x_i, y_i) - c(x_i, y_{i+1})\right) \right\} \prod_{i=1}^k \frac{d \pi}{d \productofmarginals}(x_i, y_{i+1}),    
\end{align}
for all $k \in \bN$, $\{ (x_i, y_i ) \} _{i=1} ^k \subseteq \calX \times \calY$, where $y_{k+1}$ is interpreted as $y_1$.
\end{definition}
See any of \cite{bernton2022entropic, ghosal2022stability, nutz2021introduction} for a proof, and further discussion, of the equivalence between $(c,\ve)$-cyclical invariance and the EOT. 
Cyclical monotonicity plays a similar role in characterizing (unregularized) OT plans: under weak assumptions, e.g., lower semi-continuity and non-negativity of $c$, and $\inf_{\pi \in \Pi(\mu, \nu)} \int c \, d\pi < \infty$, we have that $\pi \in \Pi(\mu, \nu)$ is cyclically monotone if and only if it is %
a (possibly non-unique) OT plan, see \cite{Vil09}. 

We are now ready to give the main result from \cite{bernton2022entropic}. 
For simplicity, we here assume that there exists a weak limit $\pi$ of the minimizers $\pi ^c _\ve$ in the EOT problem (see Section \ref{sec:ldp-uniform-convergent-cost} or \cite{bernton2022entropic} for more details), and let $\Gamma \coloneq \spt \pi$, where $\spt$ denotes the \emph{support} of a measure.
Let $\Sigma (k)$ denote the set of permutations of $\{1, \dots, k\}$ and consider a $c$-cyclically monotone set $\Gamma$. Define $I:\calX \times \calY \to [0, \infty]$ as
\begin{equation}\label{eq:rate-function-candidate-preliminaries}
    I(x,y) = \sup_{k \geq 2} \sup_{(x_i,y_i)_{i=2}^k \subseteq \Gamma} \sup_{\sigma \in \Sigma(k)} \sum_{i=1}^k c(x_i, y_i) - c(x_i, y_{\sigma(i)}),
\end{equation}
where $(x_1,y_1) = (x,y)$.
\begin{theorem}[Theorem 1.1 in \cite{bernton2022entropic}]
\label{thm:BGN22}
    ``%
    Let $\Gamma = \spt \pi$, where $\pi^c_\varepsilon \to \pi$ weakly, and define $I$ as in \eqref{eq:rate-function-candidate-preliminaries}. 
    \begin{itemize}
        \item[(a)] For any compact set $K \subseteq \calX \times \calY$,
        \[
            \limsup _{\ve \to 0} \ve \log \pi ^c _\ve (K) \leq -\inf _{(x,y) \in K} I(x,y).
        \]
        \item[(b)] Let Assumption \ref{ass:bernton44mod} hold and define the sets $\calX _0 = \proj _{\calX} \Gamma$ and $\calY _0 = \proj _\calY \Gamma$. 
        For any open set $G \subseteq \calX_0 \times \calY _0$,
        \[
            \liminf _{\ve \to 0} \ve \log \pi _\ve ^c (G) \geq - \inf_{(x,y) \in G} I(x,y). \text{''}
        \]
    \end{itemize}
\end{theorem}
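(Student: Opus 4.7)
The plan is to follow the cyclical-invariance route, leveraging the characterization of the EOT optimizer recalled just after Definition \ref{def:ci}: the minimizer $\pi^c_\varepsilon$ is $(c,\varepsilon)$-cyclically invariant with respect to $\productofmarginals = \mu\times\nu$, so \eqref{eq:cyclical-invariance} gives an exact, pointwise identity relating $d\pi^c_\varepsilon/d\productofmarginals$ evaluated at different tuples of points. This identity is the only $\varepsilon$-dependent input; all other estimates must be geometric and uniform in $\varepsilon$. The target quantity $I(x,y)$ in \eqref{eq:rate-function-candidate-preliminaries} is built out of exactly the cost-increment sums that appear in the exponent of \eqref{eq:cyclical-invariance}, which is what makes the strategy work.

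\textbf{Upper bound (a).} Fix a compact $K$ and $(x_1,y_1)\in K$. For any auxiliary points $(x_2,y_2),\dots,(x_k,y_k)$ chosen near $\Gamma=\spt\pi$ and any $\sigma\in\Sigma(k)$, I would rewrite \eqref{eq:cyclical-invariance} (applied cycle-by-cycle in the decomposition of $\sigma$) to obtain
\[
\frac{d\pi^c_\varepsilon}{d\productofmarginals}(x_1,y_1) \;=\; \exp\!\Bigl\{-\tfrac{1}{\varepsilon}\Delta_\sigma\Bigr\}\cdot F_\varepsilon\bigl((x_i,y_i)_{i=2}^k;\sigma\bigr),
\]
where $\Delta_\sigma = \sum_{i=1}^{k}\bigl(c(x_i,y_i)-c(x_i,y_{\sigma(i)})\bigr)$ and $F_\varepsilon$ is a ratio of densities of $\pi^c_\varepsilon$ evaluated at the permuted arguments. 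I would then average the identity against an auxiliary probability measure on $(x_i,y_i)_{i\ge 2}$ concentrated near $\Gamma$ (so that the denominator densities are locally uniformly controlled) and use the marginal constraints $\pi^c_\varepsilon\in\Pi(\mu,\nu)$ to collapse the integral of the numerator to an $\varepsilon$-independent quantity. Optimizing over $k$, the auxiliary points, and $\sigma$ drives $\Delta_\sigma$ toward $I(x_1,y_1)$, yielding
\[
\varepsilon\log\frac{d\pi^c_\varepsilon}{d\productofmarginals}(x_1,y_1) \;\le\; -I(x_1,y_1) + o(1),
\]
uniformly on $K$ after a covering argument that exploits compactness of $K$ and continuity of $c$. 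Integrating this pointwise bound over $K$ against $\productofmarginals$ gives $\limsup_{\varepsilon\to 0}\varepsilon\log\pi^c_\varepsilon(K)\le -\inf_K I$.

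\textbf{Lower bound (b).} The extra hypothesis (Assumption \ref{ass:bernton44mod}) should be exactly what is needed to reverse this averaging scheme. For an open $G\subseteq\calX_0\times\calY_0$, I would pick $(x_0,y_0)\in G$ with $I(x_0,y_0)$ close to $\inf_G I$, choose an almost-optimal cycle in \eqref{eq:rate-function-candidate-preliminaries}, and use the assumption to guarantee that this cycle lifts to an admissible configuration that still probes mass inside $G$. Applying \eqref{eq:cyclical-invariance} in the reverse direction and combining with a local \emph{lower} bound on $d\pi^c_\varepsilon/d\productofmarginals$ on a fixed test neighborhood near $\Gamma$ (which follows from weak convergence $\pi^c_\varepsilon\to\pi$ together with the marginal constraints) then produces the matching lower bound $\liminf_{\varepsilon\to 0}\varepsilon\log \pi^c_\varepsilon(G)\ge -\inf_G I$.

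\textbf{Main obstacle.} The genuinely delicate step is controlling the averaged factor $F_\varepsilon$ uniformly in $\varepsilon$: the permutation $\sigma$ entangles the arguments of $d\pi^c_\varepsilon/d\productofmarginals$ across indices, so the marginal cancellation is not automatic and requires a well-chosen auxiliary measure (tensor powers of $\pi^c_\varepsilon$ or of its marginals, concentrated near $\Gamma$, are the natural candidates). The analogous difficulty in (b) is producing a uniform lower bound on the density near $\Gamma$; this is where Assumption \ref{ass:bernton44mod} must do its real work, by ruling out degenerate behavior of $\Gamma$ at the boundary of $\calX_0\times\calY_0$.
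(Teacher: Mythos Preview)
Your instinct to drive everything through the cyclical-invariance identity \eqref{eq:cyclical-invariance} is correct, and this is indeed the engine of the argument. But your execution plan for the upper bound has a real gap: isolating $\frac{d\pi^c_\varepsilon}{d\productofmarginals}(x_1,y_1)$ on one side and averaging over $(x_i,y_i)_{i\ge 2}$ leaves you with a factor $F_\varepsilon$ that is a \emph{ratio} of densities of $\pi^c_\varepsilon$ at permuted arguments. Controlling such a ratio uniformly in $\varepsilon$ would require both upper and lower pointwise bounds on $d\pi^c_\varepsilon/d\productofmarginals$ near $\Gamma$, which is essentially what you are trying to establish. Weak convergence $\pi^c_\varepsilon\to\pi$ gives no pointwise density control, and the marginal constraints do not ``collapse'' the permuted product: try $\sigma$ a single $k$-cycle and integrate against $\productofmarginals^{k-1}$ or $(\pi^c_\varepsilon)^{k-1}$ --- the shifted $y$-arguments prevent any marginal cancellation.

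The paper avoids this circularity by never seeking a pointwise density bound. It keeps the full product $\prod_{i=1}^k \frac{d\pi^c_\varepsilon}{d\productofmarginals}(x_i,y_i)$ on the left of \eqref{eq:cyclical-invariance} and integrates both sides against $\productofmarginals^k$ over a product of small balls $A=\prod_{i=1}^k B_i$, with $B_1$ around the point of interest and $B_2,\dots,B_k$ around points of $\Gamma$. The left-hand side is $\prod_{i=1}^k \pi^c_\varepsilon(B_i)$; on the right, the change of variables $(x_i,y_i)\mapsto(x_i,y_{i+1})$ preserves $\productofmarginals^k$, and the resulting integral is at most $1$, giving $\prod_i \pi^c_\varepsilon(B_i)\le e^{-\delta/\varepsilon}$. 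The factors $\pi^c_\varepsilon(B_i)$ for $i\ge 2$ are bounded below away from zero by Portmanteau (open balls around $\spt\pi$), so one reads off $\varepsilon\log\pi^c_\varepsilon(B_1)\le -\delta+o(1)$ directly; a finite subcover handles compact $K$. No density ratios enter.

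For the lower bound, Assumption \ref{ass:bernton44mod} is not used to ``rule out boundary degeneracy'' of $\Gamma$ as you suggest. Its actual role is to force the closed form $I=c-(-\psi\oplus\psi^c)$ on $\calX_0\times\calY_0$ for any Kantorovich potential $\psi$ (Proposition \ref{prop:bernton45mod}). This yields finiteness of $I$ there and the two-point identity $I(x,y)+I(x',y')=c(x,y)+c(x',y')-c(x,y')-c(x',y)$ whenever $(x',y),(x,y')\in\Gamma$, which is precisely what converts the \emph{lower} half of the integrated cyclical-invariance estimate (Lemma \ref{lem:bernton31mod}, second part) into the open-set lower bound. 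Your sketch does not surface this mechanism.
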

Phrased in terms of static SBs, Theorem \ref{thm:BGN22} gives a (`weak-type', see Section \ref{sec:ldp-uniform-convergent-cost}) LDP for the sequence of minimizers of \eqref{eq:SB-def} where the reference measures $R_\ve$ are defined via $dR_\ve / d\productofmarginals \propto e^{-c/\ve} $ (see Section \ref{sec:SBs-and-EOT} for the derivation). 
That is, the measures are defined through the common cost function $c$ scaled by $\ve$. 
In this terminology, an open problem posed in \cite{bernton2022entropic} is to extend the LDP of Theorem \ref{thm:BGN22} to more general sequences $\{ R_\eta \} _{\eta > 0}$ of reference measures in the SB problem. 
As a first step to give a partial answer to this problem, in Section \ref{sec:ldp-uniform-convergent-cost}, we generalize the LDP of \cite{bernton2022entropic} to cover sequences of EOT plans associated with parametrized cost functions with sufficient regularity. 
We then show in Section \ref{sec:small-noise-reflected-browian-motion} how this naturally fits with using certain families of reference measures in the corresponding SB problems, in turn a natural generalization of the setup implicitly used in \cite{bernton2022entropic}.

\section{Small-noise families of Schr\"{o}dinger bridges}\label{sec:small-noise-sdes-rsdes-SBs}
As mentioned in the previous sections and outlined in Section \ref{sec:SBs-and-EOT}, the sequence of EOTs considered in \cite{bernton2022entropic} can be viewed as a sequence of SB problems with reference measures $R_\ve$ defined by $dR_\ve / d \productofmarginals \propto e^{-c/\ve}$. 
In the case $c(x,y) = |x-y| ^2 / 2$, with $\nu$ absolutely continuous and with finite relative entropy with respect to Lebesgue measure (this holds under Assumption \ref{ass:finite-plan-exists}), one way to think about each such problem is via the corresponding dynamic formulation (see Section \ref{sec:dynamic}): abusing notation somewhat, let $R_\ve \in \calP (\calC _1)$ be the path measure associated with the scaled Brownian motion $\{ X^\ve _t \} _{t \in [0,1]}$ started in $\mu$,
\[
    dX^\ve _t = \sqrt{\ve} dW_t, \ \ t\in [0,1],
\]
where $W$ is a standard $d$-dimensional Brownian motion, and $X^\ve_0 = X_0 $ has distribution $ \mu$, and is independent of $W$. %
The associated static SB problem becomes %
\[
    \inf _{\pi \in \Pi (\mu, \nu)} \calH (\pi  \mid \mid R_{\ve, 01}).
\]
From the transition kernel for a scaled Brownian motion, as we will see, this corresponds precisely to the EOT problem
\[
    \inf _{\pi \in \Pi (\mu, \nu)} \int \frac{1}{2}|x-y|^2 d\pi + \ve \calH (\pi \mid \mid m).
\]

The above example works, i.e., we can link the EOT problem to a reference measure connected to a stochastic process, because the Brownian dynamics imply
\[
    R_{\ve, 01} (dx,dy) = \frac{1}{(2\pi \ve) ^{-d/2}} \exp \left\{ -\frac{|x-y| ^2}{2\ve} \right\} \mu (dx) dy.
\]
Therefore, taking as in \eqref{eq:c_epsilon}, $c^\ve (x,y) =  - \ve \log (dR_{\ve, 01} /d\productofmarginals) (x,y)$ returns back $|x-y|^2/2$ (plus an additive term that does not matter, as we will see in Section \ref{sec:ceta-explicit-form}), and thus the sequence of SB problems with $R_{\ve, 01}$ as reference measure is equivalent to the sequence of EOT problems with this quadratic cost.

The above example highlights the following: For SBs, although there is a priori no parameter to take to zero as in \eqref{eq:EOT-def}, for specific model choices there are often hyperparameters that can be interpreted as a scaling parameter, similar to $\varepsilon$ in \eqref{eq:EOT-def}.
For example, for $\eta >0$, under mild regularity conditions, one may consider $R_\eta$ to be the path measure on $\calC_1$ associated with the SDE
\begin{equation}\label{eq:small-scale-diffusion-drift-f}
\begin{split}
    dX^\eta_t &= f(t, X^\eta_t) dt + \sqrt{\eta} dW_t, \ \ t \in [0, 1], \\ 
    X_0 ^\eta &= X_0 \sim \mu.
\end{split}
\end{equation}
Then, one has a family of reference processes $\{ R_\eta \}_{\eta > 0}$, and correspondingly a family of SBs $\{ \pi_\eta \}_{\eta > 0} \coloneq \{\pi^{R_\eta}\}_{\eta > 0}$.
Assuming, e.g., a uniform Lipschitz condition on $f$, one can show that \eqref{eq:small-scale-diffusion-drift-f} converges weakly to a deterministic limit ODE given by $\eta = 0$ in \eqref{eq:small-scale-diffusion-drift-f} as $\eta \downarrow 0$.

\subsection{Equivalent cost sequences \texorpdfstring{$c_\eta$}{c\_eta}}\label{sec:ceta-explicit-form}
With a family $\{R_\eta \}_{\eta > 0}$, as in Section \ref{sec:small-noise-sdes-rsdes-SBs}, we can follow the procedure in the beginning of Section \ref{sec:SBs-and-EOT} and for each $\eta$ (and $\varepsilon$) introduce a cost function 
\begin{equation}\label{eq:c_eta_epsilon_def}
    c_\eta^\varepsilon(x, y) \coloneq -\varepsilon \log \left(\frac{dR_\eta}{d\productofmarginals}\right)(x,y) \quad \eta, \varepsilon > 0.
\end{equation}
\noindent which, used in \eqref{eq:EOT-def}, gives an equivalent EOT problem when using an $\varepsilon$ regularization parameter.
Note that $\varepsilon > 0$ (in \eqref{eq:c_eta_epsilon_def} and \eqref{eq:EOT-def}) can be chosen freely;
in particular, there is that nothing stops us from taking $\varepsilon = \eta$. We therefore define, and henceforth are only concerned with, the cost%
\begin{equation}\label{eq:c_eta_def}
    c_\eta \coloneq c_\eta^\eta = -\eta \log \left(\frac{dR_\eta}{dm}\right).
\end{equation}
The relevant EOT formulation of the SB problem is then
\begin{equation}\label{eq:EOT-eta}
    \pi_\eta %
    = \argmin_{\pi \in \Pi(\mu, \nu)} \int_{\mathcal{X} \times \mathcal{Y}} c_\eta \, d\pi + \eta \calH(\pi \mid\mid \productofmarginals), \quad \eta > 0.
\end{equation}
Notice also that for all $\eta$, by \eqref{eq:cyclical-invariance}, $\pi_\eta$ has the cyclical invariance characterization
\begin{equation}\label{eq:cyclical-invariance-c_eta}
    \prod_{i=1}^k \frac{d \pi_\eta}{d \productofmarginals}(x_i, y_i) = \exp -\frac{1}{\eta}\left[\sum_{i=1}^k c_\eta(x_i, y_i) - c_\eta(x_i, y_{i+1})\right] \prod_{i=1}^k \frac{d \pi_\eta}{d \productofmarginals}(x_i, y_{i+1}).
\end{equation}

In the primary situation of interest for us, $R_\eta$ is given by a time-homogeneous Markov process on $\bR^d$, started in $\mu$, such as \eqref{eq:small-scale-diffusion-drift-f}. %
We assume that its transition kernel $\kappa_\eta$, given by $\kappa_\eta(t, x, A) = R_\eta(X_t \in A \mid X_0 = x)$, admits a density $q_\eta(t, x, y)$, with respect to the Lebesgue measure $\lambda$ on $\bR^d$.
In particular, this is true of (reflected) Brownian motion.
For the analysis to be possible under these assumptions, we must also assume that $\nu \ll \lambda$. %
One gets that $R_{\eta, 01} = R_{\eta, 0} \otimes \kappa_\eta(1, \cdot, \cdot) = \mu \otimes \kappa_\eta(1, \cdot, \cdot)$~and
\begin{equation}\label{eq:dReta01dmxy-qeta}
    \frac{dR_{\eta, 01}}{d \productofmarginals}(x, y) %
    = \frac{d(\mu \otimes \kappa_\eta(1, \cdot, \cdot))}{d(\mu \times \nu)}(x, y) = \frac{d\kappa_\eta(1, x, \cdot)}{d\nu}(y) = \frac{q_\eta(1, x, y)}{\frac{d\nu}{d\lambda}(y)},
\end{equation}
and that $c_\eta(x,y) = - \eta \log q_\eta(1, x, y) - \eta \log \frac{d\nu}{d\lambda}(y)$.
This leads to the EOT~problem %
\begin{align*}
     \pi_\eta &= \argmin_{\pi \in \Pi(\mu, \nu)} \int c_\eta \, d\pi + \eta \calH(\pi \mid\mid \productofmarginals) \\
    & = \argmin_{\pi \in \Pi(\mu, \nu)} \int_{\calX^2} - \eta \log q_\eta(1, x, y) \pi(dx, dy) + \eta \calH(\nu) + \eta \calH(\pi \mid\mid \productofmarginals), \\
\end{align*}
where $\calH(\nu)$ denotes the differential entropy of $\nu$ which we will assume to be finite. 
We ignore this term since it is constant in $\Pi(\mu, \nu)$ and therefore does not affect the minimizer of the EOT problem.
Effectively, we may define (abusing notation),
\begin{equation}\label{eq:c_eta_transition_function}
    c_\eta(x, y) \equiv -\eta \log q_\eta(1, x, y) \quad \eta > 0,
\end{equation}
and use this in \eqref{eq:EOT-eta}.
This useful form will be used, exclusively, in the investigation of reflected Brownian motion in Section \ref{sec:small-noise-reflected-browian-motion}.
Moreover, \eqref{eq:dReta01dmxy-qeta}-\eqref{eq:c_eta_transition_function} also clarify that, under conditions of well-posedness, the only relevant part of $R_{\eta,01}$ to the static SB is its transition density $q_\eta(1, x, y)$. %

For \eqref{eq:EOT-eta} to be equivalent to \eqref{eq:EOT-def} with $\varepsilon = \eta$ (for some $c$) \textbf{across all $\eta$}, i.e., for $\{ \pi_\eta \}_{\eta > 0}$ and $\{ \pi^c_\varepsilon \}_{\varepsilon > 0}$ to be identical, we must have $c_\eta \equiv c$, i.e., that $c_\eta$ does not vary (modulo additive constants) with $\eta$.
This does not hold in much generality.
However, one important example where it does is the one used at the beginning of this section: when using (scaled) standard Brownian motion, i.e., $f = 0$ in \eqref{eq:small-scale-diffusion-drift-f}.
Then, as we have already seen, we have $c_\eta(x,y) \equiv \frac{1}{2} |x-y|^2$. 
To see this also from the current perspective, notice that the associated transition density is given by $q_\eta(t,x,y) = (2\pi \eta t)^{-d/2} \allowbreak \exp(-\frac{1}{2 \eta t} \allowbreak |x-y|^2)$, and that any additive constant may be disregarded in \eqref{eq:c_eta_transition_function}.

\section{LDP for uniformly convergent costs}\label{sec:ldp-uniform-convergent-cost}
In this section, we extend the results of \cite{bernton2022entropic} to the case when $\{c_\eta\}_{\eta > 0}$, given by \eqref{eq:c_eta_def} when starting from a general sequence $\{ R _\eta \} _{\eta >0}$ of reference measures in the SB problem, converges uniformly as $\eta \downarrow 0$, to some continuous function $c$. %
Specifically, we show that the large deviation results of \cite{bernton2022entropic} persist in the following more general setting: for all $\eta > 0$, $\pi_\eta$ is defined, on the Polish product space $\calX \times \calY$, by the EOT problem
\begin{equation}\label{eq:EOT-eta-2}
    \pi_\eta \coloneq \argmin_{\pi \in \Pi(\mu, \nu)} \int_{\mathcal{X} \times \mathcal{Y}} c_\eta d\pi + \eta \calH(\pi \mid\mid \productofmarginals),
\end{equation}
cf. \eqref{eq:EOT-eta}, where $\{c_\eta\}_{\eta > 0}$ converges uniformly to a continuous function $c: \calX \times \calY \to \bR$ as $\eta \downarrow 0$; we also assume the existence of $a \in L^1(\mu)$, $b \in L^1(\nu)$ such that $c(x,y) \geq a(x) + b(y)$ (e.g., we can simply assume that $c$ is non-negative).
Similar to before, the ``new'' EOT problem \eqref{eq:EOT-eta-2} can be interpreted as a sequence of SB problems via \eqref{eq:R_epsilon}.
The goal is to show a natural generalization of Theorem \ref{thm:BGN22} to cover the type of sequences of cost functions $\{ c_\eta \} _{\eta >0}$ considered here; this is formulated in Theorem \ref{thm:weak-type-LDP} below.

We assume that $\pi_\eta \to \pi$ weakly for some $\pi \in \Pi(\mu, \nu)$, which necessarily solves %
the OT-problem, 
\begin{equation}\label{eq:OT-c}
    \min_{\pi \in \Pi(\mu, \nu)} \int_{\calX \times \calY} c \, d\pi,
\end{equation}
see the discussion after Proposition~\ref{prop:bernton32mod}.
Let $\Gamma \coloneq \spt \pi$, $\calX_0 \coloneq \operatorname{proj}_\calX \Gamma$, and $\calY_0 \coloneq \operatorname{proj}_\calY \Gamma$.
Note that $\close{\calX_0} = \spt \mu$ and $\close{\calY_0} = \spt \nu$.%

In establishing the main result of this section, similar to in \cite{bernton2022entropic}, we need a solution $\psi: \calX \to (-\infty, \infty]$ to the \emph{dual Kantorovich problem} 
\begin{equation}\label{eq:kantorovich-dual-problem}
    \sup_{\psi \in L^1(\mu)} \int_{\calX} -\psi\, d\mu + \int_{\calY} \psi^c \, d\nu,%
\end{equation}
where the \emph{$c$-transform} $\psi^c: \calY \to [-\infty, \infty)$ of $\psi$ is given by $\psi^c(y) \coloneq \sup_{x \in \calX} c(x,y) + \psi(x)$.
The function $\psi$ is called \emph{$c$-convex} if there exists a $\phi: \calY \to [-\infty, \infty]$ such that $\psi(x) = \phi^c(x) \coloneq \inf_{y \in \calY} (\phi(y) - c(x,y))$.
The \emph{$c$-subdifferential} of a $c$-convex function $\psi$ is given by $\partial_c \psi \coloneq \{(x,y) \in \calX \times \calY: -\psi(x) + \psi^c(y) = c(x,y)\}$.
We call such a $\psi$ a \emph{Kantorovich potential} if $\Gamma \subseteq \partial_c \psi$.
Any Kantorovich potential is optimal for \eqref{eq:kantorovich-dual-problem}, which also coincides with \eqref{eq:OT-c}.
The connection to the quantitites studied so far is that \eqref{eq:kantorovich-dual-problem} is the dual problem of the OT problem, for $c$, $\mu$ and $\nu$, and the supremum equals the OT infimum. %
By $\pi$ solving the OT problem \eqref{eq:OT-c}, $\Gamma$ is cyclically monotone, and a Kantorovich potential is known to exist \cite[p. 65]{Vil09}.
See \cite[Ch. 5]{Vil09} for more on the OT duality theory.
The following assumption will be used in addition to Assumption \ref{ass:finite-plan-exists}.
\begin{assumption}[cf. Assumption 4.4 in \cite{bernton2022entropic}]\label{ass:bernton44mod}
    \emph{Uniqueness of Kantorovich potentials} holds,
    meaning that for any two Kantorovich potentials $\psi_1$ and $\psi_2$, $\psi_1 - \psi_2$ is constant on $\calX_0$. %
\end{assumption}
\noindent Assumption \ref{ass:bernton44mod} holds, for example, if $\interior{\calY}$ is connected, $\nu \ll \lambda$ (Lebesgue measure), and $c(x, \cdot)$ is differentiable for all $x$ and locally Lipschitz uniformly in $x$, see \cite{bernton2022entropic}, and also \cite{santambrogio2015optimal, Kato24} for a different condition.

\begin{theorem}\label{thm:weak-type-LDP}
    Let assumptions \ref{ass:finite-plan-exists} and \ref{ass:bernton44mod} hold.
    If $c_\eta \to c$ uniformly over $\mathcal{X} \times \mathcal{Y}$ as ${\eta \downarrow 0}$, then we get that the sequence $\{ \pi_\eta \}_{\eta > 0}$ satisfies a ``weak-type'' LDP with rate function $I: \calX \times \calY \to [0, \infty]$ given by %
    \begin{equation}\label{eq:rate-function-candidate-theorem}
        I(x, y) \coloneq \sup_{k \geq 2} \sup_{(x_i, y_i)_{2=1}^k \subseteq \Gamma} \sup_{\sigma \in \Sigma(k)} \sum_{i=1}^k c(x_i, y_i) - \sum_{i=1}^k c(x_i, y_{\sigma(i)}),
    \end{equation}
    This means that we have:
    \begin{enumerate}
        \item 
        For any open set $G \subseteq \calX_0 \times \calY_0$,
        \begin{equation*}
            -\inf_{(x,y) \in G} I(x,y) \leq \liminf_{\eta \downarrow 0} \eta \log \pi_\eta(G).
        \end{equation*}
        \item For any compact set $K \subseteq \calX \times \calY$,
        \begin{equation*}
            \limsup_{\eta \downarrow 0} \eta \log \pi_\eta(K) \leq -\inf_{(x,y) \in K} I(x,y).
        \end{equation*}
    \end{enumerate}
    Further, for any Kantorovich potential $\psi$, $I = c - (-\psi \oplus \psi^c)$ on $\calX_0 \times \calY_0$.
    
\end{theorem}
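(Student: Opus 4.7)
The plan is to follow the proof of Theorem \ref{thm:BGN22} from \cite{bernton2022entropic} step by step, tracking all cost-dependent quantities through the varying $c_\eta$ and relying on uniform convergence $c_\eta \to c$ to absorb the cost variation into $o(1)$ errors under the $\eta \log$-normalization. It is convenient to first establish the closed-form identity $I = c - (-\psi \oplus \psi^c)$ on $\calX_0 \times \calY_0$: fix a Kantorovich potential $\psi$ for $c$. For ``$\leq$'', use the defining relations of the $c$-transform, namely that on $\Gamma \subseteq \partial_c \psi$ one has $c(x_i, y_i) = -\psi(x_i) + \psi^c(y_i)$ for $i \geq 2$, together with the general pointwise inequality between $c(x_i, y_{\sigma(i)})$ and $-\psi(x_i) + \psi^c(y_{\sigma(i)})$; substituting in \eqref{eq:rate-function-candidate-theorem} and using the permutation-invariance $\sum_i \psi^c(y_{\sigma(i)}) = \sum_i \psi^c(y_i)$, the sum telescopes down to $c(x, y) + \psi(x) - \psi^c(y)$. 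For ``$\geq$'', Assumption \ref{ass:bernton44mod} uniquely determines $\psi$ on $\calX_0$ up to an additive constant, so a Rockafellar-type chain formula expresses $\psi$ as a supremum of chain-costs through $\Gamma$; any chain realizing this supremum to within $\delta$ yields a configuration in the definition of $I(x, y)$ of value at least $c(x, y) + \psi(x) - \psi^c(y) - \delta$.

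For the upper bound, fix $(x^*, y^*) \in K$ and apply the cyclical invariance \eqref{eq:cyclical-invariance-c_eta} with $(x_1, y_1) = (x^*, y^*)$, $(x_i, y_i) \in \Gamma$ for $i \geq 2$, and $\sigma \in \Sigma(k)$; after isolating the density at $(x^*, y^*)$ this reads
\begin{equation*}
\frac{d\pi_\eta}{d\productofmarginals}(x^*, y^*) \prod_{i=2}^k \frac{d\pi_\eta}{d\productofmarginals}(x_i, y_i) = \exp\!\left(-\frac{1}{\eta}\sum_{i=1}^k \left[c_\eta(x_i, y_i) - c_\eta(x_i, y_{\sigma(i)})\right]\right) \prod_{i=1}^k \frac{d\pi_\eta}{d\productofmarginals}(x_i, y_{\sigma(i)}).
\end{equation*}
Following BGN, I would integrate both sides over small balls around each of the $(x_i, y_i)$ so that the ``off-diagonal'' products $\prod_i \frac{d\pi_\eta}{d\productofmarginals}(x_i, y_{\sigma(i)})$ reduce to products of marginal masses controlled by $\mu$ and $\nu$, while weak convergence $\pi_\eta \to \pi$ provides uniform lower bounds on the diagonal factors for $(x_i, y_i) \in \Gamma$. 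Since $\sum_{i=1}^k [c_\eta(x_i, y_i) - c_\eta(x_i, y_{\sigma(i)})]$ differs from its $c$-counterpart by at most $2k\|c_\eta - c\|_\infty = o(1)$ for fixed $k$, the cost exchange $c_\eta \to c$ is harmless after the $\eta \log$-scaling. Taking $\limsup_{\eta \downarrow 0}$ and optimizing over $k$, $(x_i, y_i)_{i \geq 2} \subseteq \Gamma$, and $\sigma$ yields the claimed upper bound with $I$ given by \eqref{eq:rate-function-candidate-theorem}.

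For the lower bound, I would use the Schr\"{o}dinger-potential representation of $\pi_\eta$: by convex duality for \eqref{eq:EOT-eta-2}, there exist potentials $(f_\eta, g_\eta)$ such that $\frac{d\pi_\eta}{d\productofmarginals}(x, y) = \exp((f_\eta(x) + g_\eta(y) - c_\eta(x, y))/\eta)$, giving $\eta \log \frac{d\pi_\eta}{d\productofmarginals}(x, y) = f_\eta(x) + g_\eta(y) - c_\eta(x, y)$. Adapting the stability arguments of \cite{bernton2022entropic} (notably their Lemma~4.7 and its applications) to the uniformly convergent cost setting, I would show that, after an appropriate normalization, $(-f_\eta, g_\eta)$ converges to a Kantorovich pair $(\psi, \psi^c)$ for $c$ on $\calX_0 \times \calY_0$; combined with the identity of the first paragraph, this yields the pointwise limit $\eta \log \frac{d\pi_\eta}{d\productofmarginals}(x, y) \to -I(x, y)$. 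For an open $G \subseteq \calX_0 \times \calY_0$, I pick $(x^*, y^*) \in G$ with $I(x^*, y^*)$ close to $\inf_G I$, choose a small ball $B \subseteq G$ around it, and bound $\pi_\eta(B) \geq \productofmarginals(B) \inf_B \frac{d\pi_\eta}{d\productofmarginals}$; taking $\liminf$ of $\eta \log$ and invoking continuity of $I$ at $(x^*, y^*)$ delivers the lower bound.

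The main obstacle, and the only genuinely new content compared to \cite{bernton2022entropic}, is the stability step for the Schr\"{o}dinger potentials: establishing $(f_\eta, g_\eta) \to (-\psi, \psi^c)$ when the cost $c_\eta$ itself drifts with $\eta$ requires carefully propagating the uniform-convergence error $\|c_\eta - c\|_\infty \to 0$ through the BGN bootstrap estimates, so that the limiting identification with the Kantorovich pair for $c$ is preserved at each stage.
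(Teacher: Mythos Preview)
Your treatment of the upper bound and of the identity $I = c - (-\psi \oplus \psi^c)$ is essentially the paper's approach: both follow \cite{bernton2022entropic} verbatim after observing that the cyclic sum $\sum_i [c_\eta(x_i,y_i)-c_\eta(x_i,y_{\sigma(i)})]$ differs from its $c$-counterpart by at most $2k\|c_\eta - c\|_\infty$, which vanishes under the $\eta\log$-scaling for each fixed $k$.

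The lower bound is where you diverge from the paper and where your proposal has a genuine gap. You propose to go through convergence of the Schr\"odinger potentials $(f_\eta, g_\eta) \to (-\psi, \psi^c)$, and you correctly flag this as ``the main obstacle''---but you do not prove it, and it is \emph{not} how \cite{bernton2022entropic} argues (there is no potential-stability lemma feeding into their LDP; their Corollary~4.7 is the lower bound itself, not a stability input). Establishing convergence of $(f_\eta,g_\eta)$ with enough uniformity to justify the $\inf_B \frac{d\pi_\eta}{d\productofmarginals}$ step is a substantial result in its own right, and you would need it under drifting costs $c_\eta$, which is strictly harder than the fixed-cost case.

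The paper avoids all of this. Its version of Lemma~3.1 has a \emph{second half}: for $A \subseteq A_k(\delta,\delta')$, if the shifted set $\overrightarrow{A}=\{(x_i,y_{i+1})_i:(x_i,y_i)_i\in A\}$ satisfies $\liminf_{\eta\downarrow 0}\eta\log\pi_\eta^k(\overrightarrow{A})=0$, then $\liminf_{\eta\downarrow 0}\eta\log\pi_\eta^k(A)\geq -\delta'$. Once this two-sided lemma is in place (with the $\|c_\eta-c\|_\infty$ slack already absorbed), the lower bound for open $G\subseteq \calX_0\times\calY_0$ follows \cite{bernton2022entropic} \emph{without any further modification}: given $(x,y)\in G$, the identity for $I$ lets one build a finite configuration through $\Gamma$ whose shifted version has all pairs in $\Gamma$ (hence $\pi_\eta^k$-mass bounded away from zero by weak convergence $\pi_\eta\to\pi$), and whose cyclic cost gap equals $I(x,y)$ up to continuity errors. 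Applying the $\liminf$ half of the lemma to product balls around this configuration gives the bound directly. Your route via potential convergence, even if it can be completed, is considerably heavier than what the problem requires.
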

\noindent Above, we used the notation $(a \oplus b)(x,y) \coloneq a(x) + b(y)$.
The term ``weak-type'' LDP is borrowed from \cite{Kato24}, and refers to the fact that Theorem \ref{thm:weak-type-LDP} is not a full LDP. In fact, it is not even a weak LDP according to the standard definition.
However, it yields a full LDP under a compactness assumption on the supports, as highlighted by the following corollary; a brief proof is given at the end of this section. 
\begin{corollary}\label{cor:katocor31mod}
    In addition to the assumptions of Theorem \ref{thm:weak-type-LDP}, assume that %
    $\spt \mu$ and $\spt \nu$ are both compact.
    Then $\{\pi_\eta\}_{\eta > 0}$ satisfies a (full) LDP on $\calX \times \calY$ with the rate function %
    \begin{equation}\label{eq:rate-function-extended-cor}
        J(x,y) = \begin{cases}
            (c - (-\psi \oplus \psi^c))(x,y), &(x,y) \in \spt \mu \times \spt \nu, \\
            \infty, &(x,y) \in (\calX \times \calY) \setminus (\spt \mu \times \spt \nu).
        \end{cases}
    \end{equation}
\end{corollary}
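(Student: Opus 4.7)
The strategy is to upgrade Theorem \ref{thm:weak-type-LDP} to a full LDP by exploiting the compactness of $\spt \mu \times \spt \nu$, which both confines the measures $\pi_\eta$ and sharpens the identification of $I$ with $c - (-\psi \oplus \psi^c)$. The key preliminary observation is that, under the compactness assumption, one has the \emph{equalities} $\calX_0 = \spt \mu$ and $\calY_0 = \spt \nu$, not merely the closure equalities $\close{\calX_0} = \spt \mu$ and $\close{\calY_0} = \spt \nu$ already noted in Section~\ref{sec:ldp-uniform-convergent-cost}. Indeed, $\spt \pi \subseteq \spt \mu \times \spt \nu$ is now a closed subset of a compact set, so $\calX_0 = \proj_\calX \spt \pi$ is the continuous image of a compact set, hence compact and in particular closed; combined with $\close{\calX_0} = \spt \mu$ this forces $\calX_0 = \spt \mu$, and similarly for $\calY_0$. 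Consequently $\calX_0 \times \calY_0 = \spt \mu \times \spt \nu$, and the last statement of Theorem \ref{thm:weak-type-LDP} strengthens to $I = c - (-\psi \oplus \psi^c) = J$ on all of $\spt \mu \times \spt \nu$.

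With this identification, the full LDP follows directly. Since each $\pi_\eta \in \Pi(\mu, \nu)$ is supported in $\spt \mu \times \spt \nu$, for any closed $F \subseteq \calX \times \calY$ we have $\pi_\eta(F) = \pi_\eta(F \cap (\spt \mu \times \spt \nu))$, and $F \cap (\spt \mu \times \spt \nu)$ is compact. Applying Theorem \ref{thm:weak-type-LDP}(2) and using $I = J$ on the support together with $J \equiv \infty$ outside,
\[
    \limsup_{\eta \downarrow 0} \eta \log \pi_\eta(F) \leq -\inf_{F \cap (\spt \mu \times \spt \nu)} I = -\inf_F J.
\]

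For the lower bound, let $G \subseteq \calX \times \calY$ be open. Then $G \cap (\spt \mu \times \spt \nu) = G \cap (\calX_0 \times \calY_0)$ is open in the subspace topology of $\calX_0 \times \calY_0$, so Theorem \ref{thm:weak-type-LDP}(1) applies. Using $\pi_\eta(G) = \pi_\eta(G \cap (\spt \mu \times \spt \nu))$ and $I = J$ on this set,
\[
    \liminf_{\eta \downarrow 0} \eta \log \pi_\eta(G) \geq -\inf_{G \cap (\calX_0 \times \calY_0)} I = -\inf_G J.
\]
Finally, $J$ is continuous on the compact set $\spt \mu \times \spt \nu$ and infinite elsewhere, so its sub-level sets are closed subsets of that compact set and hence compact, making $J$ a \emph{good} rate function.

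The only nontrivial step is the upgrade from $\close{\calX_0} = \spt \mu$ to $\calX_0 = \spt \mu$ under compactness, which is what closes the potential gap between the function $I$ supplied by Theorem \ref{thm:weak-type-LDP} and the claimed rate function $J$ on points of $\spt \mu \times \spt \nu$ lying outside $\calX_0 \times \calY_0$; the rest is a routine reduction of the weak-type LDP via the support constraint on $\pi_\eta$.
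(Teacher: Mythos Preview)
Your proof is correct and follows essentially the same approach as the paper: first establish $\calX_0 = \spt \mu$ and $\calY_0 = \spt \nu$ under compactness, then reduce the full LDP to the weak-type LDP of Theorem~\ref{thm:weak-type-LDP} via the support constraint on $\pi_\eta$. Your argument for $\calX_0 = \spt \mu$ (continuous image of a compact set is compact, hence closed) is slightly more direct than the paper's, which instead verifies that the projection is a closed map via a sequential argument; and you make the upper and lower bounds explicit where the paper simply asserts that a full LDP holds on $\spt\mu\times\spt\nu$ and then extends by the contraction principle (or the trivial observation $\pi_\eta=0$ off the support).
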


\begin{remark}
    The criterion of uniform convergence of $c_\eta(x,y) = -\eta \log \frac{dR _\eta}{dm}(x,y)$ to $c$ may be interpreted in the broadest sense as $\|c_\eta + a_\eta \oplus b_\eta - c\|_\infty \to 0$ as $\eta \downarrow 0$, where $a_\eta: \calX \to \bR$ and $b_\eta: \calY \to \bR$ are $\mu$- and $\nu$-integrable sequences of functions, respectively, since the addition of any such term $a_\eta \oplus b_\eta$ does not affect the minimizer of the EOT problem.
\end{remark}

Consider the function $I$ in \eqref{eq:rate-function-candidate-theorem}.
$I(x,y)$ can be interpreted as the maximal amount of improvement (per unit mass) that can be made to a plan that includes $(x,y)$.
By cyclical monotonicity, for any point $(x,y)$ in the support of a $c$-optimal transport plan, $I(x,y)=0$.
We have that $I$ is non-negative, lower semicontinuous, and essentially equal to $I'$ given below; see \cite{bernton2022entropic} for more discussion about this.
With $(x_1,y_1) = (x,y)$, and $y_{k+1}$ is interpreted as $y_1$,
\begin{equation}\label{eq:rate-function-candidate-prime}
    I'(x,y) \coloneq \sup_{k \geq 2} \sup_{(x_i,y_i)_{i=2}^k \subseteq \Gamma} \sum_{i=1}^k c(x_i, y_i) - c(x_i, y_{i + 1}).
\end{equation}

We proceed to the proof of Theorem \ref{thm:weak-type-LDP}. %
From \cite{bernton2022entropic}, only minor modifications are needed to show Theorem~\ref{thm:weak-type-LDP}. However, for the paper to remain self-contained and for the ease of the reader, all the results we need from \cite{bernton2022entropic} (i.e., results \ref{lem:bernton31mod}-\ref{cor:bernton47mod}) are restated here. We also provide the adjusted proofs; no proofs are provided for Proposition \ref{prop:bernton45mod} and Corollary \ref{cor:bernton47mod}. %

\begin{remark}
    Before stating the claims and associated proofs of this section, we point out where we have diverged from the presentation in \cite{bernton2022entropic}.
    Lemma \ref{lem:bernton31mod} requires taking a $\delta_- < \delta$, to allow for the supremum difference $\|c_\eta - c\|_\infty$ to be handled as in \eqref{eq:bernton31mod-2}. This additional slack then requires the result to be restated slightly compared to the corresponding version in \cite{bernton2022entropic} (Lemma 3.1 therein), which is here done using a $\limsup$-$\log$-formulation (see \eqref{eq:bernton31mod-limsup}).
    Lemma \ref{lem:bernton41mod} is then based on thisslightly modified formulation, which leads to a similar modification and restatement compared to Lemma 4.1 in \cite{bernton2022entropic}.
    We also strengthened this result somewhat in another direction, by directly comparing with the rate function, yielding (almost) the large deviation upper bound for small balls.
    The extension of this result to compact sets, i.e., the large deviation upper bound in Corollary \ref{cor:bernton43mod}, can be done identically to the proof of Corollary 4.3 in \cite{bernton2022entropic}. Still, we find a somewhat different, original proof, following nicely from the way Lemmas \ref{lem:bernton31mod}-\ref{lem:bernton41mod} are formulated.
    Proposition \ref{prop:bernton32mod} is identical to Proposition 3.2 in \cite{bernton2022entropic} and follows from Lemma \ref{lem:bernton31mod} with similar adjustments. We omit the proofs of \ref{prop:bernton45mod} and \ref{cor:bernton47mod} as they can be carried out precisely as in \cite{bernton2022entropic}.
\end{remark}

\begin{lemma}[cf. Lemma 3.1 in \cite{bernton2022entropic}]\label{lem:bernton31mod}
    For $\delta, \delta' \in [0, \infty]$ with $\delta \leq \delta'$, let $A_k(\delta, \delta') \coloneq \{(x_i, y_i)_{i=1}^k \in (\calX \times \calY)^k: \sum_{i=1}^k c(x_i, y_i) - \sum_{i=1}^k c(x_i, y_{i + 1}) \in [\delta, \delta']\}$, where $y_{k+1}$ is interpreted as $y_1$. %
    Then, for any Borel set $A \subseteq A_k(\delta, \delta')$,
    \begin{equation}\label{eq:bernton31mod-limsup}
        \limsup_{\eta \downarrow 0} \eta \log \pi_\eta^k(A) \leq -\delta,
    \end{equation}
    and if $\shiftedkpairsset{A} \coloneq \{(x_i, y_{i+1})_{i=1}^k: (x_i, y_{i})_{i=1}^k \in A\}$ satisfies $\liminf_{\eta \downarrow 0} \eta \log \pi_\eta^k(\shiftedkpairsset{A}) = 0$, then also,
    \begin{equation}
        \liminf_{\eta \downarrow 0} \eta \log \pi_\eta^k(A) \geq -\delta'.
    \end{equation}
\end{lemma}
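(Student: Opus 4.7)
The plan is to leverage the $(c_\eta,\eta)$-cyclical invariance \eqref{eq:cyclical-invariance-c_eta} of $\pi_\eta$, together with the uniform convergence $\|c_\eta - c\|_\infty \to 0$, to express $\pi_\eta^k(A)$ in terms of $\pi_\eta^k(\shiftedkpairsset{A})$ up to a controllable exponential factor. The two bounds in the lemma then fall out from a two-sided estimate of the $c_\eta$-cost differences by the $c$-cost differences defining $A_k(\delta,\delta')$.

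Concretely, I would first observe that for every $(x_i, y_i)_{i=1}^k \in A \subseteq A_k(\delta, \delta')$, the bound $|c_\eta - c| \leq \|c_\eta - c\|_\infty$ gives
$$\sum_{i=1}^k \bigl[c_\eta(x_i, y_i) - c_\eta(x_i, y_{i+1})\bigr] \in [\delta - \gamma_\eta,\ \delta' + \gamma_\eta],$$
where $\gamma_\eta \coloneq 2k\|c_\eta - c\|_\infty \to 0$. Integrating the cyclical invariance identity \eqref{eq:cyclical-invariance-c_eta} over $A$ against $m^k = (\mu \times \nu)^k$ yields
$$\pi_\eta^k(A) = \int_A \exp\!\left(-\frac{1}{\eta}\sum_{i=1}^k \bigl[c_\eta(x_i, y_i) - c_\eta(x_i, y_{i+1})\bigr]\right) \prod_{i=1}^k \frac{d\pi_\eta}{dm}(x_i, y_{i+1})\, dm^k.$$
A change of variables corresponding to the cyclic shift $(y_1, \ldots, y_k) \mapsto (y_2, \ldots, y_k, y_1)$, which preserves the product measure $\nu^k$ and hence $m^k$, identifies the integral of $\prod_i (d\pi_\eta/dm)(x_i, y_{i+1})\, dm^k$ on $A$ with $\pi_\eta^k(\shiftedkpairsset{A})$.

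Combining these two pieces, the upper bound uses the lower estimate $\delta - \gamma_\eta$ for the $c_\eta$-differences and the trivial inequality $\pi_\eta^k(\shiftedkpairsset{A}) \leq 1$ to obtain $\pi_\eta^k(A) \leq e^{-(\delta - \gamma_\eta)/\eta}$; taking $\eta \log$ and $\limsup$ eliminates the $\gamma_\eta$ term and produces $-\delta$. The lower bound uses the upper estimate $\delta' + \gamma_\eta$ to obtain $\pi_\eta^k(A) \geq e^{-(\delta' + \gamma_\eta)/\eta}\, \pi_\eta^k(\shiftedkpairsset{A})$; taking $\eta \log$ and $\liminf$, and invoking the hypothesis $\liminf \eta \log \pi_\eta^k(\shiftedkpairsset{A}) = 0$, gives $-\delta'$. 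The only non-trivial point, and the reason for the $\limsup$/$\liminf$ formulation (rather than an exact asymptotic as in the corresponding result of \cite{bernton2022entropic}), is the need to absorb the additive exponent $\gamma_\eta/\eta$ arising from the $c_\eta$–$c$ mismatch; this is harmless since $\eta \cdot (\gamma_\eta/\eta) = \gamma_\eta \to 0$, and it is precisely here that uniform (rather than merely pointwise) convergence of $c_\eta$ is essential.
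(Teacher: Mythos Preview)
Your proposal is correct and follows essentially the same route as the paper: use the $(c_\eta,\eta)$-cyclical invariance of $\pi_\eta$, bound the $c_\eta$-cost cycle by the $c$-cost cycle via uniform convergence, integrate over $A$ against $m^k$, and identify the resulting integral with $\pi_\eta^k(\shiftedkpairsset{A})$ by the cyclic shift in the $y$-variables. The only cosmetic difference is bookkeeping: the paper fixes an auxiliary $\delta_- < \delta$ and chooses $\eta_0$ so that $\|c_\eta - c\|_\infty \le (\delta-\delta_-)/(2k)$, then lets $\delta_- \uparrow \delta$ at the end, whereas you carry the explicit error term $\gamma_\eta = 2k\|c_\eta - c\|_\infty$ through and kill it directly in the $\limsup/\liminf$; both arrive at the same bounds.
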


\begin{proof}
    We have by cyclical monotonicity that
    \begin{equation}\label{eq:bernton31mod-1}
        \prod_{i = 1}^k \frac{d \pi_\eta}{d \productofmarginals}(x_i, y_i) = \exp\left\{-\frac{1}{\eta} \sum_{i=1}^k (c_\eta(x_i, y_i) - c_\eta(x_i, y_{i+1})) \right\} \prod_{i = 1}^k \frac{d \pi_\eta}{d \productofmarginals}(x_i, y_{i+1})
    \end{equation}
    for some version of $\frac{d\pi_\eta}{dm}$. %
    Consider any $\delta_- < \delta$ and take $\eta_0 > 0$ such that $\|c_\eta - c\|_\infty \leq \frac{\delta - \delta_-}{2k}$ for any $\eta \leq \eta_0$.
    Then for all $\eta \leq \eta_0$ and $(x_i, y_i)_{i=1}^k \in A$,
    \begin{equation}\label{eq:bernton31mod-2}
    \begin{split}
        \sum_{i=1}^k (c_\eta(x_i, y_i) - c_\eta(x_i, y_{i+1})) &\geq \sum_{i=1}^k (c(x_i, y_i) - \frac{\delta - \delta_-}{2k}) - \sum_{i=1}^k (c(x_i, y_{i+1}) + \frac{\delta - \delta_-}{2k}) \\
        &= \sum_{i=1}^k c(x_i, y_i) - \sum_{i=1}^k c(x_i, y_{i+1}) - (\delta - \delta_-) \\
        &\geq \delta - (\delta - \delta_-) = \delta_-,
    \end{split}
    \end{equation}
    so \eqref{eq:bernton31mod-1} is $\prod_{i = 1}^k \frac{d \pi_\eta}{d \productofmarginals}(x_i, y_i) \leq e^{-\frac{\delta_-}{\eta}} \prod_{i = 1}^k \frac{d \pi_\eta}{d \productofmarginals}(x_i, y_{i+1})$.
    Thus, when integrating over $A$ with respect to $\productofmarginals^k$, we get
    \begin{equation}
        \pi_\eta^k(A) \leq e^{-\frac{\delta_-}{\eta}} \int_A \prod_{i = 1}^k \frac{d \pi_\eta}{d \productofmarginals}(x_i, y_{i+1}) d\productofmarginals^k = e^{-\frac{\delta_-}{\eta}} \productofmarginals^k(\shiftedkpairsset{A}) \leq e^{-\frac{\delta_-}{\eta}}.
    \end{equation}
    This means that $\limsup_{\eta \downarrow 0} \eta \log \pi_\eta^k(A) \leq -\delta_-$ and further $\limsup_{\eta \downarrow 0} \eta \log \pi_\eta^k(A) \leq -\delta$, since the former inequality holds for all $\delta_- \leq \delta$.
    TThis establishes the first part of the lemma.

    Taking $\delta_+ > \delta'$, one gets analogously, for a small enough $\eta_0$,  
    \begin{equation}
        \pi_\eta^k(A) \geq e^{-\frac{\delta_+}{\eta}} \productofmarginals^k(\shiftedkpairsset{A}), \forall \eta \leq \eta_0.
    \end{equation}
    \noindent Then, 
    \begin{equation}
        \liminf_{\eta \downarrow 0} \eta \log \pi_\eta^k(A) \geq \liminf_{\eta \downarrow 0} \eta \log \productofmarginals^k(\shiftedkpairsset{A}) - \delta_+ = - \delta_+,
    \end{equation}
    \noindent and since this holds for all $\delta_+ > \delta'$, we have $\liminf_{\eta \downarrow 0} \eta \log \pi_\eta^k(A) \geq \delta'$.
\end{proof}

\begin{proposition}[cf. Proposition 3.2 in \cite{bernton2022entropic}]\label{prop:bernton32mod}
    Let $\pi$ be a cluster point of $\{ \pi_\eta \}_{\eta > 0}$.
    Then, $\operatorname{spt} \pi$ is cyclically monotone.
\end{proposition}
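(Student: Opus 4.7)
The plan is to argue by contradiction, leveraging the upper bound in Lemma~\ref{lem:bernton31mod} together with weak convergence and the portmanteau theorem. Fix a subsequence $\eta_n \downarrow 0$ along which $\pi_{\eta_n} \to \pi$ weakly, and suppose for contradiction that $\spt \pi$ is not $c$-cyclically monotone. Then there exist $k \geq 2$ and points $(x_i, y_i)_{i=1}^k \subseteq \spt \pi$ with
\[
    \sum_{i=1}^k c(x_i, y_i) - \sum_{i=1}^k c(x_i, y_{i+1}) = 2\delta > 0,
\]
where $y_{k+1}$ is read as $y_1$.

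First I would use the continuity of $c$ on $\calX \times \calY$ (a standing hypothesis in Section~\ref{sec:ldp-uniform-convergent-cost}) to choose, for each $i$, open neighborhoods $U_i \ni x_i$ and $V_i \ni y_i$ small enough that on the product set $A = \prod_{i=1}^k (U_i \times V_i)$ the cyclical sum is bounded below by $\delta$; in the notation of Lemma~\ref{lem:bernton31mod}, this means $A \subseteq A_k(\delta, \infty)$. Applying the first conclusion of Lemma~\ref{lem:bernton31mod} to this Borel set yields
\[
    \limsup_{n \to \infty} \eta_n \log \pi_{\eta_n}^k(A) \leq -\delta,
\]
and in particular $\pi_{\eta_n}^k(A) \to 0$ as $n \to \infty$.

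On the other hand, since each $(x_i, y_i) \in \spt \pi$ and $U_i \times V_i$ is an open neighborhood of $(x_i, y_i)$, we have $\pi(U_i \times V_i) > 0$, and the product structure of $A$ gives $\pi^k(A) = \prod_{i=1}^k \pi(U_i \times V_i) > 0$. Weak convergence $\pi_{\eta_n} \to \pi$ entails weak convergence of the product measures $\pi_{\eta_n}^k \to \pi^k$ on $(\calX \times \calY)^k$, so the portmanteau theorem applied to the open set $A$ yields
\[
    \liminf_{n \to \infty} \pi_{\eta_n}^k(A) \geq \pi^k(A) > 0,
\]
contradicting $\pi_{\eta_n}^k(A) \to 0$. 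Hence $\spt \pi$ must be $c$-cyclically monotone.

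The argument is short because Lemma~\ref{lem:bernton31mod} does the heavy lifting. The only mildly delicate point — not really an obstacle — is ensuring the product neighborhood $A$ can be chosen small enough that the continuous cyclical sum stays above $\delta$ throughout; this follows immediately from continuity of $c$ and the finiteness of $k$. I emphasize that no property of the approximating costs $c_\eta$ beyond what is already encoded in Lemma~\ref{lem:bernton31mod} is needed, so the uniform-convergence hypothesis $\|c_\eta - c\|_\infty \to 0$ enters only through that lemma.
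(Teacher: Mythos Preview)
Your proof is correct and follows essentially the same argument as the paper's: assume a violation of $c$-cyclical monotonicity, use continuity of $c$ to pass to an open product neighborhood on which the cyclical sum stays uniformly positive, invoke Lemma~\ref{lem:bernton31mod} to force $\pi_\eta^k$ of that set to vanish, and contradict this via the portmanteau lower bound since each factor has positive $\pi$-mass. Your version is arguably slightly more careful in that you explicitly work along a convergent subsequence (as befits a cluster point) rather than tacitly assuming full convergence.
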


\begin{proof}
    Assume the contrary, i.e., $\exists \, (x_i, y_i)_{i=1}^k \subseteq \operatorname{spt} \pi$ such that $\sum_{i=1}^k c(x_i, y_i) - c(x_i, y_{i+1}) > 0$.
    By continuity of $c$, there exists a $\delta > 0$ and open neighborhoods $G_i \ni (x_i, y_i)$ such that, %
    \begin{equation}\label{eq:bernton32mod-1}
        \sum_{i=1}^k c(x_i, y_i) - c(x_i, y_{i+1}) > \delta, \quad\text{in} \ G \coloneq \prod_{i=1}^k G_i.
    \end{equation}
    Since $(x_i, y_i) \in \spt \pi$, we have $\pi(G_i) > 0$, so $\liminf_{\eta \downarrow 0} \pi_\eta^k(G) \geq \pi^k(G) > 0$.
    But \eqref{eq:bernton32mod-1} gives that $G \subseteq A_k(\delta, \infty)$, so $\limsup_{\eta \downarrow 0} \eta \log \pi_\eta^k(G) \leq -\delta$ by Lemma~\ref{lem:bernton31mod}, which can only hold if $\log \pi_\eta^k(G) \to -\infty$, and subsequently $\pi_\eta^k(G) \to 0$, a contradiction.
\end{proof}

\noindent Since the set of couplings $\Pi(\mu, \nu)$ is compact in the weak topology, the sequence $\{ \pi_\eta \}_{\eta > 0}$ is guaranteed to have a cluster point. 
Thus, by the same compactness, if there is a unique cyclically monotone $\pi \in \Pi(\mu, \nu)$, we have that $\pi_\eta \to \pi$ weakly. %
In the sequel, for simplicity of presentation and notation, we assume that $\pi_\eta \to \pi$ for some cluster point $\pi$.
This must not hold in general, but the results will be true along any convergent subsequence of $\{ \pi_\eta \}_{\eta > 0}$. In what follows, we use the notation that for a metric space $(\calX, d)$,  $B_\calX(x, r) \coloneq \{y \in x: d(x, y) < \varepsilon\}$ denotes the open ball of radius $r > 0$ around $x \in X$. %

\begin{lemma}[cf. Lemma 4.1 in \cite{bernton2022entropic}]\label{lem:bernton41mod}
    Let $(x,y) \in \calX \times \calY$.
    Then, for any $\delta < I(x, y)$ (given in \eqref{eq:rate-function-candidate-theorem}), there exists $r > 0$ such that %
    \begin{equation}
        \limsup_{\eta \downarrow 0} \eta \log \pi_\eta\big(B_{\calX \times \calY}((x,y), r)\big) \leq -\delta, %
    \end{equation}
    where $B_{\calX \times \calY}((x,y), r)$ denotes an open ball of radius $r$ around $(x, y)$ in $\calX \times \calY$.
\end{lemma}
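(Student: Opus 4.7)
The plan is to unpack the definition of $I(x,y)$, translate the condition $\delta < I(x,y)$ into an event where cyclic monotonicity fails strictly, and then pass from a $k$-fold bound from Lemma \ref{lem:bernton31mod} to a bound on $\pi_\eta$ of a small neighborhood of $(x,y)$ by using that the remaining $k-1$ factors lie in $\spt \pi$.

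First I would choose $\delta'$ with $\delta < \delta' < I(x,y)$ and pick $k \geq 2$, $(x_i, y_i)_{i=2}^k \subseteq \Gamma$, and $\sigma \in \Sigma(k)$ with $(x_1, y_1) \coloneq (x,y)$ such that
\[
\sum_{i=1}^k c(x_i, y_i) - \sum_{i=1}^k c(x_i, y_{\sigma(i)}) > \delta'.
\]
Decomposing $\sigma$ into disjoint cycles, any cycle supported in $\{2, \ldots, k\}$ contributes at most $0$, since $\Gamma$ is cyclically monotone by Proposition \ref{prop:bernton32mod}. Hence the cycle of $\sigma$ through the index $1$ must on its own contribute more than $\delta'$, and after discarding the other cycles and relabeling the indices, I may assume $\sigma$ is the cyclic shift $i \mapsto i+1 \pmod k$, with the corresponding sub-tuple of $(x_i, y_i)_{i=2}^k$ still lying in $\Gamma$.

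Next I would use continuity of $c$ to choose $r > 0$ and open neighborhoods $G_i \ni (x_i, y_i)$ for $i = 2, \ldots, k$ small enough that for all tuples with $(x_1', y_1') \in B_{\calX \times \calY}((x,y), r)$ and $(x_i', y_i') \in G_i$ the cyclic difference $\sum_{i=1}^k (c(x_i', y_i') - c(x_i', y_{i+1}'))$ still exceeds $\delta$. Then $G \coloneq B_{\calX \times \calY}((x,y), r) \times \prod_{i=2}^k G_i$ is a Borel subset of $A_k(\delta, \infty)$ in the notation of Lemma \ref{lem:bernton31mod}, which therefore yields
\[
\limsup_{\eta \downarrow 0} \eta \log \pi_\eta^k(G) \leq -\delta.
\]
Since $(x_i, y_i) \in \spt \pi$, each $G_i$ satisfies $\pi(G_i) > 0$, and by the Portmanteau theorem $\liminf_{\eta \downarrow 0} \pi_\eta(G_i) \geq \pi(G_i) > 0$; in particular $\eta \log \pi_\eta(G_i) \to 0$. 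Factoring $\pi_\eta^k(G) = \pi_\eta(B_{\calX \times \calY}((x,y), r)) \cdot \prod_{i=2}^k \pi_\eta(G_i)$ and taking $\limsup_{\eta \downarrow 0}$ of $\eta \log(\cdot)$ gives the claim.

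The main obstacle is the reduction from a general permutation $\sigma$ in the definition of $I$ to a cyclic shift, which is needed to bring the problem into the scope of Lemma \ref{lem:bernton31mod}; this is precisely where one uses the cyclical monotonicity of $\Gamma$ from Proposition \ref{prop:bernton32mod}. Once this reduction is in hand, the rest is a straightforward continuity-and-weak-convergence argument combined with the $k$-fold exponential bound.
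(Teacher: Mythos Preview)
Your proposal is correct and takes essentially the same approach as the paper. The only difference is that you explicitly reduce from a general permutation $\sigma$ to a cyclic shift via the cyclical monotonicity of $\Gamma$, whereas the paper invokes the identity $I = I'$ (stated just before the lemma, with reference to \cite{bernton2022entropic}) to obtain a cyclic-shift witness directly; the paper also first disposes of the trivial case $\delta \leq 0$, which you should add since your cycle-through-$1$ argument tacitly needs $\delta' > 0$ to rule out $1$ being a fixed point of $\sigma$.
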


\begin{proof}
    For $\delta \leq 0$, the result holds trivially, so we may restrict our attention to $\delta \geq 0$.
    If $\delta < I(x,y)$, there exists $(x_i, y_i)_{i=2}^k \subseteq \Gamma$ such that $\delta_0 \coloneq \sum_{i=1}^k c(x_i, y_i) - c(x_i, y_{i+1}) > \delta$, where $(x_1, y_1) = (x, y)$.
    Choose $r > 0$ small enough so that %
    \begin{equation}
        \sum_{i=1}^k c(\tilde{x}_i, \tilde{y}_i) - c(\tilde{x}_i, \tilde{y}_{i+1}) \geq \delta, \ \ \text{for all} \ (\tilde{x}_i, \tilde{y}_i)_{i=1}^k \in \prod_{i=1}^k B_{\calX \times \calY}((x_i,y_i), r).
    \end{equation}
    Then $\prod_{i=1}^k B_{\calX \times \calY}((x_i,y_i), r) \subseteq A_k(\delta, \infty)$.
    By Lemma \ref{lem:bernton31mod},
    \begin{equation}\label{eq:bernton41mod-1}
        \limsup_{\eta \downarrow 0} \eta \log \pi_\eta^k\big(\prod_{i=1}^k B_{\calX \times \calY}((x_i,y_i), r)\big) \leq -\delta,%
    \end{equation}
    We now use that the left-hand side of the previous display is 
    \begin{equation}\label{eq:bernton41mod-15}
    \begin{split}
        &\limsup_{\eta \downarrow 0} \eta \log \pi_\eta^k\big(\prod_{i=1}^k B_{\calX \times \calY}((x_i,y_i), r)\big) \\ 
        & \quad = \limsup_{\eta \downarrow 0} \left( \eta  \log \pi_\eta\big(B_{\calX \times \calY}((x,y), r)\big) + \eta \log \pi_\eta^{k-1}\big(\prod_{i=2}^{k} B_{\calX \times \calY}((x_i,y_i), r)\big) \right).
    \end{split}
    \end{equation}
    The second term inside the parenthesis has a limit $0$, %
    since for all $i$, by the Portmanteau theorem, 
    \begin{equation*}
    \begin{split}
        1 &\geq \limsup_{\eta \downarrow 0} \pi_\eta(B_{\calX \times \calY}((x_i,y_i), r)) \\ 
        & \geq \liminf_{\eta \downarrow 0} \pi_\eta(B_{\calX \times \calY}((x_i,y_i), r)) \\
        &\geq \pi(B_{\calX \times \calY}((x_i,y_i), r)) \\
        &> 0, %
    \end{split}
    \end{equation*}
    where we use that $(x_i, y_i) \in \spt \pi$. It follows that \eqref{eq:bernton41mod-1} is precisely
    \begin{equation}\label{eq:bernton41mod-2}
        \limsup_{\eta \downarrow 0} \eta \log \pi_\eta(B_{\calX \times \calY}((x,y), r)) \leq -\delta,%
    \end{equation}
   which completes the proof.
\end{proof}

Lemma \ref{lem:bernton41mod} gives the large deviation upper bound for compact sets, as stated next.

\begin{corollary}[cf. Corollary 4.3 in \cite{bernton2022entropic}]\label{cor:bernton43mod}
    For any compact set $K \subseteq \calX \times \calY$, 
    \begin{equation}
        \limsup_{\eta \downarrow 0} \eta \log \pi_\eta(K) \leq -\inf_{(x,y) \in K} I(x,y).%
    \end{equation}
\end{corollary}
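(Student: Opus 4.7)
The plan is to promote the pointwise small-ball upper bound of Lemma \ref{lem:bernton41mod} to a bound over the compact set $K$ by a standard open-cover argument together with the classical large-deviation ``principle of the largest term'' for finitely many summands.

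First, I fix any $\alpha < \inf_{(x,y) \in K} I(x,y)$. By the definition of the infimum, $\alpha < I(x,y)$ for every $(x,y) \in K$, so Lemma \ref{lem:bernton41mod} (applied at each such point with $\delta = \alpha$) produces a radius $r_{(x,y)} > 0$ with
\[
\limsup_{\eta \downarrow 0} \eta \log \pi_\eta\bigl(B_{\calX \times \calY}((x,y), r_{(x,y)})\bigr) \leq -\alpha.
\]
The collection $\{ B_{\calX \times \calY}((x,y), r_{(x,y)}) : (x,y) \in K \}$ is an open cover of $K$, so by compactness I extract a finite subcover $K \subseteq \bigcup_{j=1}^N B_j$.

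Combining the union bound $\pi_\eta(K) \leq \sum_{j=1}^N \pi_\eta(B_j)$ with the elementary identity
\[
\limsup_{\eta \downarrow 0} \eta \log \sum_{j=1}^N a_j(\eta) = \max_{1 \leq j \leq N} \limsup_{\eta \downarrow 0} \eta \log a_j(\eta),
\]
valid for any finite collection of non-negative functions $a_j(\eta)$, yields
\[
\limsup_{\eta \downarrow 0} \eta \log \pi_\eta(K) \leq \max_{1 \leq j \leq N} \limsup_{\eta \downarrow 0} \eta \log \pi_\eta(B_j) \leq -\alpha.
\]
Letting $\alpha \uparrow \inf_{(x,y) \in K} I(x,y)$ (allowing $\alpha \to \infty$ if the infimum is $+\infty$) then finishes the argument.

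No serious obstacle is anticipated: the corollary is a soft upgrade from pointwise to compact-set control, and the only delicate point is ensuring that $\alpha$ can be taken strictly below the infimum so that Lemma \ref{lem:bernton41mod} applies uniformly on $K$. The argument never needs to control the radii $r_{(x,y)}$ uniformly in $(x,y)$, as the compactness of $K$ removes that concern. The degenerate cases are handled automatically: $\inf_K I = 0$ is trivial since $\pi_\eta(K) \leq 1$, while $\inf_K I = +\infty$ follows by letting $\alpha$ grow without bound.
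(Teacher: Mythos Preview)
Your proposal is correct and follows essentially the same approach as the paper: fix a level $\alpha$ (the paper writes $\delta$) strictly below $\inf_K I$, apply Lemma~\ref{lem:bernton41mod} at each point of $K$, extract a finite subcover by compactness, and combine via the principle of the largest term before letting $\alpha \uparrow \inf_K I$. The only cosmetic difference is that the paper writes out the bound $\sum_j \pi_\eta(B_j) \leq N \max_j \pi_\eta(B_j)$ explicitly rather than invoking the identity you cite.
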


\begin{proof}

    For any (finite) $\delta < \inf_{(x,y) \in K} I(x,y)$, %
    and each $(x, y) \in K$, %
    by Lemma~\ref{lem:bernton41mod}, there is an $r_{xy} > 0$ such that $\limsup_{\eta \downarrow 0} \eta \log\pi_\eta(B_{\calX \times \calY}((x,y), r_{xy}) < -\delta$.
    By compactness of $K$, choose a finite subcover for $K$ of such balls $\{B_j\}_{j=1}^{N}$, where $B_j = B_{\calX \times \calY}((x_j,y_j), r_{x_j y_j})$.
    Then, 
    \begin{equation}
    \begin{split}
        \limsup_{\eta \downarrow 0} \eta \log \pi_\eta(K) &\leq \limsup_{\eta \downarrow 0} \eta \log \pi_\eta(\cup_{j=1}^{N} B_j) \\
        &\leq \limsup_{\eta \downarrow 0} \eta \log \sum_{j=1}^{N} \pi_\eta(B_j) \\
        &\leq \limsup_{\eta \downarrow 0} \eta \log(N \max_j \pi_\eta(B_j)) \\
        &= \max_j \limsup_{\eta \downarrow 0} \eta \log \pi_\eta(B_j) \\
        &\leq -\delta.
    \end{split}
    \end{equation}
    Since this inequality holds for any $\delta < \inf_{(x,y) \in K} I(x, y)$, the claim follows.
\end{proof}

This completes the proof of the large deviation upper bound. 
The statements and proofs of the two following results, Proposition \ref{prop:bernton45mod} and Corollary \ref{cor:bernton47mod} are identical to \cite{bernton2022entropic}. We therefore omit them here and refer the reader to \cite{bernton2022entropic}.

\begin{proposition}[Proposition 4.5 in \cite{bernton2022entropic}]\label{prop:bernton45mod}
    ``Let Assumption \ref{ass:bernton44mod} hold.
    Then, $I$, defined in \eqref{eq:rate-function-candidate-theorem}, is also given by $I = c - (-\psi \oplus \psi^c)$ on $\calX_0 \times \calY_0$, for any Kantorovich potential $\psi$.
    In particular, $I < \infty$ on $\calX_0 \times \calY_0$.
    If $(x,y), (x',y') \in \calX_0 \times \calY_0$ are such that $(x',y), (x,y') \in \Gamma$, then $I(x,y)+I(x',y')=c(x,y)+c(x',y')-c(x,y')-c(x',y)$.''
\end{proposition}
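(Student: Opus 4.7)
The plan is to establish the identity $I = c - (-\psi \oplus \psi^c)$ on $\calX_0 \times \calY_0$ by proving the two inequalities separately, and then to derive the cross-pair identity as a direct algebraic consequence. Throughout, I write $U(x,y) \coloneq c(x,y) - (-\psi(x) + \psi^c(y))$, which is non-negative by the Kantorovich duality inequality that defines $\psi^c$, and I exploit repeatedly that membership in $\Gamma \subseteq \partial_c \psi$ promotes the pointwise inequality $c \geq -\psi + \psi^c$ to an equality.

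For the upper bound $I \leq U$, I would fix a candidate cycle $(x_1,y_1) = (x,y)$, $(x_i,y_i)_{i=2}^k \subseteq \Gamma$ and $\sigma \in \Sigma(k)$, and estimate the sum in \eqref{eq:rate-function-candidate-theorem} using two ingredients: (i) for $i \geq 2$, the subdifferential equality $c(x_i,y_i) = -\psi(x_i) + \psi^c(y_i)$; (ii) for each $i$, the $c$-transform inequality $c(x_i, y_{\sigma(i)}) \geq -\psi(x_i) + \psi^c(y_{\sigma(i)})$. Substituting both and using that $\sigma$ is a permutation, so $\sum_i \psi^c(y_{\sigma(i)}) = \sum_i \psi^c(y_i)$, all the $\psi(x_i)$ terms for $i \geq 2$ cancel and the $\psi^c$ terms telescope, leaving exactly $c(x,y) + \psi(x) - \psi^c(y) = U(x,y)$. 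Taking the supremum over cycles yields $I \leq U$.

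The lower bound $I(x,y) \geq U(x,y)$ on $\calX_0 \times \calY_0$ is the principal difficulty, and is where Assumption \ref{ass:bernton44mod} is indispensable. The cleanest scenario is when one can find $(x^*,y^*) \in \Gamma$ with both $(x,y^*) \in \Gamma$ and $(x^*,y) \in \Gamma$; then taking $k = 2$ with $\sigma$ the swap, a direct computation
\begin{equation*}
    c(x,y) + c(x^*,y^*) - c(x,y^*) - c(x^*,y) = c(x,y) + \psi(x) - \psi^c(y) = U(x,y),
\end{equation*}
in which each of the three off-diagonal $c$-values is replaced by its exact $\partial_c \psi$ expression, produces the desired lower bound. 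In general such a ``diagonal'' $(x^*,y^*)$ need not be in $\Gamma$, and one must use a longer cycle: I would select $(x_2,y_2),\dots,(x_k,y_k)\in \Gamma$ and take $\sigma$ the cyclic shift, arranging that each shifted pair $(x,y_2),(x_2,y_3),\dots,(x_{k-1},y_k),(x_k,y)$ also lies in $\Gamma \subseteq \partial_c \psi$, so that the same telescoping gives $U(x,y)$ exactly. The existence of such a chain from $x$ to $y$ inside $\Gamma$ is a bipartite connectivity statement on the graph with vertex set $\spt \mu \cup \spt \nu$ and edges $\Gamma$; if this graph decomposed into disjoint components, one could shift $\psi$ by independent additive constants on the respective components and obtain a distinct Kantorovich potential, contradicting Assumption \ref{ass:bernton44mod}. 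I expect this connectivity/chaining argument, carried out at the level of supports of $\mu$ and $\nu$ and possibly requiring an approximation step to reach boundary points of $\calX_0, \calY_0$, to be the main obstacle, and I would follow the template of \cite{bernton2022entropic} for this step.

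Finally, once $I = U$ on $\calX_0 \times \calY_0$ is in hand, the cross-pair identity is immediate: substituting $I(x,y) = c(x,y) + \psi(x) - \psi^c(y)$ and $I(x',y') = c(x',y') + \psi(x') - \psi^c(y')$, and then using the $\partial_c \psi$ equalities $c(x,y') = -\psi(x) + \psi^c(y')$ and $c(x',y) = -\psi(x') + \psi^c(y)$, which hold because both cross-pairs are assumed to lie in $\Gamma$, the $\psi$ and $\psi^c$ terms cancel and one is left with $c(x,y) + c(x',y') - c(x,y') - c(x',y)$, as claimed.
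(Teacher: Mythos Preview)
The paper omits the proof entirely, quoting the statement verbatim from \cite{bernton2022entropic} and referring the reader there; so there is no in-paper argument to compare against. Your sketch is sound and aligned with the original: the upper bound $I \le U$ via the telescoping computation (using $\Gamma \subseteq \partial_c\psi$ for $i\ge 2$ and the $c$-transform inequality for the shifted pairs) and the cross-pair identity are complete and correct as written. For the lower bound you correctly isolate the crux --- producing a $\Gamma$-chain from $x$ to $y$ so that every shifted pair $(x_i,y_{i+1})$ lies in $\partial_c\psi$, turning the telescoping inequality into an equality --- and correctly tie the existence of such chains to Assumption~\ref{ass:bernton44mod}. In \cite{bernton2022entropic} this step is packaged slightly differently: one builds a $c$-convex $\hat\psi$ directly from $I'$ via a Rockafellar-type construction, checks $\Gamma\subseteq\partial_c\hat\psi$, and then invokes uniqueness of potentials to identify $\hat\psi$ with $\psi$ up to an additive constant. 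Your graph-connectivity heuristic is precisely the geometric content of that argument, and you appropriately defer to their template for the details.
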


\begin{corollary}[Corollary 4.7 in \cite{bernton2022entropic}]\label{cor:bernton47mod}
    ``Let Assumption \ref{ass:bernton44mod} hold.
    For any open set $G \subseteq \calX_0 \times \calY_0$,
    \begin{equation}
        \liminf_{\eta \downarrow 0} \eta \log \pi_\eta(G) \geq -\inf_{(x,y) \in G} I(x,y).\text{''}
    \end{equation}
\end{corollary}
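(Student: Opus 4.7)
The plan is to follow the structure of \cite[Corollary 4.7]{bernton2022entropic}, observing that the arguments there transfer essentially verbatim to the uniformly convergent cost setting. The crux is a local lower bound: for each $(x,y) \in G$ and each $\delta > I(x,y)$, to find $r > 0$ such that
\begin{equation*}
    \liminf_{\eta \downarrow 0} \eta \log \pi_\eta\big(B_{\calX \times \calY}((x,y), r)\big) \geq -\delta.
\end{equation*}
Since $G$ is open, such balls lie in $G$ for small $r$, so by monotonicity $\liminf_\eta \eta \log \pi_\eta(G) \geq -\delta$; letting $\delta \downarrow I(x,y)$ and taking an infimum over $(x,y) \in G$ yields the stated bound.

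For this local bound, I would construct a cyclic configuration $(x_1, y_1) = (x, y), (x_2, y_2), \ldots, (x_k, y_k) \in \calX \times \calY$ satisfying (i) $(x_i, y_i) \in \Gamma$ for $i \geq 2$, and (ii) $(x_i, y_{i+1}) \in \Gamma$ for all $i = 1, \ldots, k$ with $y_{k+1} \coloneq y_1$. Using the characterization $I = c - (-\psi \oplus \psi^c)$ on $\calX_0 \times \calY_0$ from Proposition \ref{prop:bernton45mod}, a direct computation (expanding $c(x_i, y_i)$ and $c(x_i, y_{i+1})$ via the Kantorovich identity on $\Gamma$) shows that (i)--(ii) force
\begin{equation*}
    \sum_{i=1}^k \big[c(x_i, y_i) - c(x_i, y_{i+1})\big] = c(x,y) + \psi(x) - \psi^c(y) = I(x,y).
\end{equation*}
The existence of such a configuration is a reachability question inside the bipartite graph induced by $\Gamma$: one has $y_* \in \calY_0$ with $(x, y_*) \in \Gamma$ (since $x \in \calX_0$) and $x_* \in \calX_0$ with $(x_*, y) \in \Gamma$ (since $y \in \calY_0$), and in general one chains through a finite number of intermediate pairs of $\Gamma$ to close the cycle. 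This geometric/combinatorial step is precisely as in \cite{bernton2022entropic}, leveraging Assumption \ref{ass:bernton44mod} and the $c$-convex structure of $\partial_c \psi$.

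Given such a configuration, pick $r > 0$ small enough that, by continuity of $c$, the cyclic cost sum stays below $\delta$ uniformly on $A \coloneq \prod_{i=1}^k B_{\calX \times \calY}((x_i, y_i), r)$. The shifted set $\shiftedkpairsset{A} = \prod_{i=1}^k B_{\calX \times \calY}((x_i, y_{i+1}), r)$ has $\pi_\eta^k$-mass not vanishing exponentially: by the Portmanteau theorem, each factor $\pi_\eta(B_{\calX \times \calY}((x_i, y_{i+1}), r))$ is eventually bounded below by a positive constant since $(x_i, y_{i+1}) \in \spt \pi$, so $\eta \log \pi_\eta^k(\shiftedkpairsset{A}) \to 0$. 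Lemma~\ref{lem:bernton31mod}(b) then gives $\liminf_\eta \eta \log \pi_\eta^k(A) \geq -\delta$. Factoring $\pi_\eta^k(A) = \prod_{i=1}^k \pi_\eta(B_{\calX \times \calY}((x_i, y_i), r))$ and applying Portmanteau once more to see that $\eta \log \pi_\eta(B_{\calX \times \calY}((x_i, y_i), r)) \to 0$ for each $i \geq 2$ (as $(x_i, y_i) \in \spt \pi$), the local lower bound follows.

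The main obstacle is the construction in the middle paragraph: guaranteeing that a cyclic configuration satisfying (i)--(ii) exists for arbitrary $(x,y) \in \calX_0 \times \calY_0$. This is a genuinely geometric question about the connectivity of $\Gamma$, handled in \cite{bernton2022entropic} via the uniqueness of the Kantorovich potential provided by Assumption~\ref{ass:bernton44mod}. Importantly, this step is completely decoupled from the sequence $\{c_\eta\}$ -- it depends only on the limit cost $c$ and the limit plan $\pi$ -- so adapting the argument to our setting introduces no new difficulty: the only place the uniform convergence $c_\eta \to c$ enters is in passing from \eqref{eq:bernton31mod-1} to \eqref{eq:bernton31mod-2} in Lemma~\ref{lem:bernton31mod}, which has already been handled.
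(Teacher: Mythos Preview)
Your proposal is correct and matches the paper's approach: the paper explicitly omits the proof, stating it can be carried out precisely as in \cite{bernton2022entropic}, and you have accurately reconstructed that argument---the local lower bound via a cyclic configuration with $(x_i,y_{i+1})\in\Gamma$, the application of the second half of Lemma~\ref{lem:bernton31mod}, and the observation that the geometric chain construction depends only on the limit data $(c,\pi)$ and hence transfers unchanged.
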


Corollaries \ref{cor:bernton43mod} and \ref{cor:bernton47mod} give the large deviation upper and lower bounds in Theorem \ref{thm:weak-type-LDP}, respectively.
Proposition \ref{prop:bernton45mod} gives the alternate form of the rate function as stated in Theorem~\ref{thm:weak-type-LDP}, and combined these results thus prove Theorem \ref{thm:weak-type-LDP}. Lastly, %
Corollary \ref{cor:katocor31mod}, which gives the full LDP when we consider compact supports, can be proven as follows, as alluded to in \cite{Kato24}.

\begin{proof}[Proof of Corollary \ref{cor:katocor31mod}]
    If $\spt \mu, \spt \nu$ are compact, then $\restr{\proj_{\calX_0}}{\spt \mu \times \spt \nu}$ is a closed map, i.e. $\proj_{\calX_0}(F)$ is closed for any closed $F \subseteq \spt \mu \times \spt \nu$. %
    To see this, assume that $x \in \calX$ is a limit point of $\proj_{\calX_0}(F)$.
    Then there exists a sequence $\{(x_i, y_i)\}_{i = 1}^\infty \subseteq F$, with $\{x_i\}_{i = 1}^\infty$ converging to $x$. %
    By the compactness of $F$, %
    the sequence has a subsequence converging to $(x, y) \in F$ for some $y$,
    which implies $x \in \proj_{\calX_0}(F)$.
    Thus $\proj_{\calX_0}(F)$ is closed.
    This further implies that $\calX_0 = \proj_{\calX_0}(\Gamma)$ is closed, and thus $\calX_0 = \close{\calX_0} = \spt \mu$ (and similarly $\calY_0 = \spt \nu$).
    Also, under the assumption, any closed set in $\spt \mu \times \spt \nu$ is compact.
    Thus, from Theorem~\ref{thm:weak-type-LDP}, a full LDP with the same rate function holds on $\spt \mu \times \spt \nu$.
    By an easy argument based on that $\pi_\eta(A) = 0$ for any $A \in \calX \times \calY \setminus (\spt \mu \times \spt \nu)$ and all $\eta > 0$ (or the contraction principle \cite[Theorem~4.2.1]{Dembo98} applied to the inclusion map $\spt \mu \times \spt \nu \subseteq \calX \times \calY$), the LDP holds on $\calX \times \calY$ with the extended rate function in \eqref{eq:rate-function-extended-cor}.
\end{proof}

\section{Uniform convergence for reflected Brownian motion}\label{sec:small-noise-reflected-browian-motion}

In this section, we demonstrate the usefulness of the results of Section \ref{sec:ldp-uniform-convergent-cost}, by applying them to a specific choice of dynamics that has appeared in the Schr\"{o}dinger bridge literature, but that is not covered by the results of \cite{bernton2022entropic}, namely \textit{reflected Brownian Schr\"{o}dinger bridges} on bounded convex domains.
We start with a brief introduction to such dynamics; throughout the section we are working on some probability space $(\Omega, \calF, \bP)$.

\subsubsection*{Reflected SDEs/SBs}\label{sec:reflected-SB}

A useful modification of the reference dynamics \eqref{eq:small-scale-diffusion-drift-f} is given by adding reflection at the boundary of some  domain $D \subseteq \bR^d$, thus constraining $\{X_t\}$ to $\close{D}$ by reflecting (bouncing back) at the boundary $\partial D$.
This is often expressed as
\begin{equation}\label{eq:small-scale-reflected-diffusion-drift-f}
\begin{split}
    d&X^\eta_t = f(t, X^\eta_t) dt + \sqrt{\eta} dW_t + n(X_t) d\Lambda_t, \quad t \geq 0, \\
    &X^\eta_0 = X_0 \sim \mu,
\end{split}
\end{equation}
where $W$ is a standard Brownian motion (BM) under $\bP$, $n(x)$ is the inward directed normal (we do not consider oblique reflections) at $x \in \partial D$ and $0$ in $\interior{D}$, $\Lambda$ is a local time satisfying $\Lambda_t = \int_0^t \mathbbm{1}_{X_s \in \partial D} d\Lambda_s$, $X_t \in \close{D} \, \, \forall t \in [0,1]$, and $X_0 \sim \mu$ with $\spt \mu \in \close{D}$.
Alternatively, $X_t = \Gamma Y_t$ for $Y_t$ satisfying
\begin{equation}\label{eq:small-scale-reflected-diffusion-drift-f-Skorokhod}
\begin{split}
    d&Y^\eta_t = f(t, \Gamma Y^\eta_t) dt + \sqrt{\eta} dW_t,%
\end{split}
\end{equation}
where $\Gamma: \calC_1 \to \calC_{1}$ is the \emph{Skorokhod map} for $D$, defined by $f \mapsto g$ in the \emph{Skorokhod problem}: for $f \in \calC_1$, find $(g, l)$, with $g \in \calC_1, l: [0,1] \to \bR$, such that
\begin{itemize}
    \item $g = f + \int_0^{\cdot} n(g(s)) dl(s)$ with $n$ being the inwards normal mentioned above, %
    \item %
    $g(t) \in \close{D} \, \forall t \geq 0$,
    \item %
    $l = \int_0^{\cdot} \mathbbm{1}_{g(s) \in \partial D} dl(s)$.
\end{itemize}
Existence and uniqueness of a solution to this problem hold under fairly mild conditions on $D$ \cite[Theorem 1.1]{lions1984stochastic}.
Further, $\Gamma$ is $\frac{1}{2}$-H\"{o}lder under these conditions.
Under stronger assumptions, like $D$ being polygonal, $\Gamma$ can be shown to be Lipschitz \cite{dupuis1999convex, dupuis1993sdes}. %
Similar to standard (i.e., non-reflected) SDEs, the solution is a strong Markov process under well-posedness, that is also time-homogeneous if $f$ does not depend on $t$.

For $f = 0$, we obtain the ($\eta$-scaled) \emph{reflected Brownian motion} (RBM), simply given by the $X^\eta = \Gamma(X_0 + \sqrt{\eta} W)$ (a strong solution).
This type of reference process is the primary focus in this section.
We denote the transition density of the reflected process $X^\eta$ by $p^r_\eta(t, x, y)$, and by $p_\eta(t, x, y)$ for the unreflected $X_0 + \sqrt{\eta} W$.
Similar to $p_\eta$, $p^r_\eta$ obeys the time scale-invariance relation: $p^r_\eta(t, x, y) = p^r_1(\eta t, x, y)$.
Further, $p^r_\eta$ can be characterized as a \emph{Neumann heat kernel}, solving the heat equation with Neumann boundary conditions, starting from a point mass at $x$: letting $\nabla_y, \Delta_y$ denote the gradient and Laplacian in $y$, respectively,%
\begin{equation}\label{eq:Fokker-Planck-reflected}
\begin{split}
    \frac{\partial}{\partial t} p^r_\eta (t, x, y) &= \frac{\eta}{2} \Delta_y p^r_\eta (t, x, y), \quad t > 0, \, y \in \close{D} \\
    n(x) \cdot \nabla_y p(t, x, y) &= 0, \quad y \in \partial D,  \\
    p(t, x, y) &\to \delta_x, \ \text{as} \ t \downarrow 0.
\end{split}
\end{equation}
For more information on reflected SDEs, see \cite{pilipenko2014introduction}.

A \emph{reflected SB} refers to a SB $\pi_\eta \coloneq \pi^{R_\eta}$, where $R_\eta$ is the path measure of the reflected SDE \eqref{eq:small-scale-reflected-diffusion-drift-f}.
As before, we are interested in the entire collection $\{\pi_\eta\}_{\eta > 0}$. %
Reflected SBs were introduced and computed in a low-dimensional setting in \cite{caluya2021reflected}.
In \cite{deng2024reflected} they were incorporated in high-dimensional generative modeling using forward-backward SDE theory \cite{chen2021likelihood}.

Motivated by their use in applications within data science, we want to apply the results of Section \ref{sec:ldp-uniform-convergent-cost} to reflected Brownian SBs, on convex domains $D$.
Thus, we need to establish that $\{c_\eta\}_{\eta > 0}$ converges uniformly (to $c(x,y) = \frac{1}{2}|x-y|^2$), where $c_\eta$ is given by (see Section \ref{sec:ceta-explicit-form})
\begin{equation}\label{eq:c_eta-UB-LB-section}
    c_\eta(x,y) = -\eta \log p^r_\eta(1, x, y).
\end{equation}
As explained in Section \ref{sec:ceta-explicit-form}, using this form relies on the following assumption.

\begin{assumption}
    The differential entropy of $\nu$, $\calH(\nu) \coloneq - \int_{\bR^d} \log \frac{d\nu}{d\lambda} \, d\nu$ is finite.
\end{assumption}

In order to establish the necessary convergence criterion, we will study the transition density $p^r_\eta$, specifically its asymptotic upper and lower bounds as $\eta \downarrow 0$. %

\subsection{Examples with explicit transition densities}

For $D = [0, \infty)$ with reflection at $0$, the Skorokhod map is given by $\Gamma f(t) = f(t) -(\inf_{s \leq t} f(s) \land 0)$.
One can show that the transition density of $\Gamma W$ is 
\begin{equation}\label{eq:transition_density_halfinfinite}
    p_+(t, x, y) \coloneq p(t, x, y) + p(t, -x, y),
\end{equation}
For $D=[0,1]$, an explicit expression for the Skorokhod map is also available, see \cite{kruk2007explicit}.
The transition density of $\Gamma W$ is then given by
\begin{equation}\label{eq:transition_density_compact}
    \bP(\Gamma W_t \in dy \mid W_0 = x) = dy \sum_{n = -\infty}^\infty p_+(t, x + 2n, y)
\end{equation}
See \cite[p. 97]{KS08}  for a reference to the formulas in \eqref{eq:transition_density_halfinfinite} and \eqref{eq:transition_density_compact}.
A physical interpretation of them is that unrestrained rays starting at $x$ and ending in certain points will instead end at $y$ when reflected, i.e. under the Skorokhod map, see also 
\cite{lou2023reflected, harrison1981reflected}.
Using the formulas, it is straightforward to show that $c_\eta$ converges uniformly to $c(x,y) = \frac{1}{2} |x - y|^2$ in both cases, see Figure \ref{fig:reflected_01_c_eta_convergence} for the latter case.
By the independence of dimensions, these results also extend to $[0, 1]^d$ and $[0, \infty)^d$.
Further, we get an explicit expression for the Doob $h$-transform necessary to investigate the bridge processes of RBM.

Polygonal domains $D$ are treated in \cite{dupuis1993sdes} and \cite{harrison1981reflected}.
It is shown in \cite{dupuis1993sdes} that $\Gamma$ is then Lipschitz.
This could simplify some of our arguments, 
but we refrain from using this assumption on the domain.

\begin{figure}
    \centering
    \includegraphics[width=0.5\linewidth]{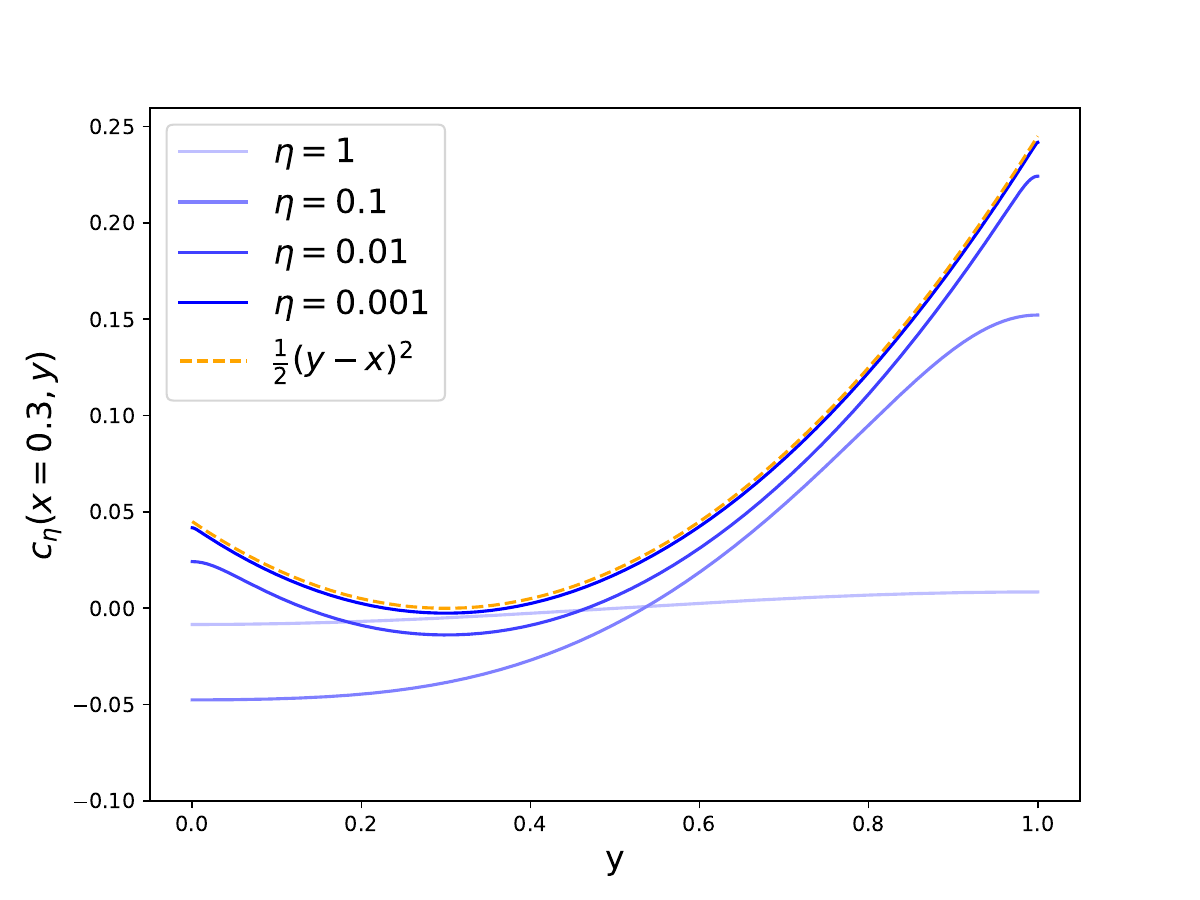}
    \caption{Uniform convergence of $c_\eta$ on $D = [0, 1]$, computed using \eqref{eq:transition_density_compact} on the slice $x=0.3$.}
    \label{fig:reflected_01_c_eta_convergence}
\end{figure}

\subsection{The general bounded convex case}

We now prove convergence in a more general setting. %
Assume that $D$ is an open, bounded, and convex set. %
This then satisfies the ``extension property'', see \cite[p. 47]{davies1989heat}, and the mild conditions in \cite{lions1984stochastic}, meaning $\Gamma$ is continuous.
Further $\interior{D}$ is connected, so Assumption \ref{ass:bernton44mod} holds.
The goal is the following general theorem.
\begin{theorem}\label{thm:reflected-ceta-uniform-convergence}
    With $D$ open, bounded, and convex, and $c_\eta(x,y) \coloneq -\eta \log p^r_\eta(1, x, y)$,
    $\{c_\eta\}_{\eta > 0}$ converges uniformly to $c(x,y) = \frac{1}{2}|x-y|^2$ as $\eta \downarrow 0$.
\end{theorem}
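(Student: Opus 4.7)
The plan is to sandwich $p^r_\eta(1,x,y)$ between Gaussian-type estimates whose logarithmic-scale exponents both reduce to $\tfrac12|x-y|^2$ as $\eta\downarrow 0$, uniformly in $(x,y)\in\bar D\times\bar D$. By the scale-invariance $p^r_\eta(1,x,y)=p^r_1(\eta,x,y)$, the question reduces to short-time asymptotics of the Neumann heat kernel on the fixed convex domain $D$, and uniformity will follow from compactness of $\bar D\times\bar D$ together with error terms independent of $(x,y)$.

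For the \emph{lower bound on $p^r_\eta$} (equivalently, the upper bound on $c_\eta$), I would compare $p^r_\eta$ with the free Gaussian $p_\eta(t,x,y)=(2\pi\eta t)^{-d/2}\exp(-|x-y|^2/(2\eta t))$. Fixing $y\in\bar D$, let $u_\eta(t,x):=p^r_\eta(t,x,y)-p_\eta(t,x,y)$. Both functions satisfy $\partial_t u=\tfrac{\eta}{2}\Delta_x u$ on $D$ (using the symmetry $p^r_\eta(t,x,y)=p^r_\eta(t,y,x)$ to write \eqref{eq:Fokker-Planck-reflected} in the $x$-variable), they agree as distributions at $t=0$, and on $\partial D$ the Neumann condition gives $\partial_{n(x)}u_\eta=-\partial_{n(x)}p_\eta$. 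Convexity of $D$ is the only geometric input: for $x\in\partial D$ with inward unit normal $n(x)$ and $y\in\bar D$, $n(x)\cdot(y-x)\ge 0$, which forces $\partial_{n(x)}p_\eta(t,x,y)\ge 0$, i.e.\ $\partial_{n(x)}u_\eta\le 0$. A parabolic maximum principle combined with the Hopf boundary-point lemma (to exclude interior minima attained on $\partial D$) then gives $u_\eta\ge 0$ on $[0,1]\times\bar D$, so
\[
p^r_\eta(1,x,y)\;\ge\;(2\pi\eta)^{-d/2}\exp\Big(-\tfrac{|x-y|^2}{2\eta}\Big),
\]
and $c_\eta(x,y)\le \tfrac12|x-y|^2+\tfrac{d\eta}{2}\log(2\pi\eta)$, with the error $o(1)$ uniformly in $(x,y)$.

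For the \emph{upper bound on $p^r_\eta$} (equivalently, the lower bound on $c_\eta$), I would use a path-space large-deviation argument. Schilder's theorem yields an LDP on $\calC_1$ for $\sqrt\eta W$ with good rate function $I(\phi)=\tfrac12\int_0^1|\phi'|^2\,dt$, and since $D$ is bounded and convex, $\Gamma$ is continuous on $\calC_1$ by \cite[Theorem~1.1]{lions1984stochastic}, so the contraction principle gives an LDP for $X^\eta=\Gamma(x+\sqrt\eta W)$. Projecting at $t=1$ yields an LDP for $X^\eta_1$ with rate
\[
J_x(y)=\inf\Big\{\tfrac12\int_0^1|\phi'|^2\,dt:\phi(0)=0,\ \Gamma(x+\phi)(1)=y\Big\}.
\]
Convexity enters twice in identifying $J_x(y)=\tfrac12|y-x|^2$: the straight line $\phi(t)=t(y-x)$ stays in $\bar D$, requires no reflection, and attains this value; for the matching lower bound, writing $\psi:=\Gamma(x+\phi)$ and using the defining property of the Skorokhod problem that $\psi'\cdot n=0$ on $\{l'>0\}$, one gets $|\phi'|^2=|\psi'|^2-2\psi'\cdot n\,l'+(l')^2=|\psi'|^2+(l')^2\ge|\psi'|^2$ pointwise, and Cauchy--Schwarz then yields $\int|\phi'|^2\,dt\ge\int|\psi'|^2\,dt\ge|y-x|^2$. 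Finally I would convert the LDP on $\bP^x(X^\eta_1\in B(y,\delta))$ to a pointwise density upper bound via short-time Harnack/equicontinuity estimates for the Neumann heat kernel on the compact set $\bar D$ — the resulting prefactor is polynomial in $\eta$ and is absorbed under $-\eta\log$ — and then send $\delta\downarrow 0$ after $\eta\downarrow 0$ to obtain $c_\eta(x,y)\ge\tfrac12|y-x|^2-o(1)$ uniformly.

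The delicate step is the last one: the LDP naturally bounds probabilities of open sets while $c_\eta$ is a logarithm of the density, and bridging the two uniformly in $(x,y)$ without degrading the sharp Gaussian constant $\tfrac12$ requires quantitative short-time regularity of $p^r_\eta$. The situation is further complicated by the fact that $\Gamma$ is only H\"older (not Lipschitz) on a general bounded convex $D$, so both the Lyapunov/Cauchy--Schwarz step at non-smooth boundary points and the density-extraction conversion must be carried out with care.
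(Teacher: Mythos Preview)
Your argument for the lower bound on $p^r_\eta$ (equivalently, the upper bound on $c_\eta$) is correct and considerably more direct than what the paper does. The paper obtains this bound by a lengthy probabilistic construction (Lemma~\ref{lem:uniform-lb-D-eps} and Proposition~\ref{prop:uniform-lb}): first restricting to $D^{-\varepsilon}$ and comparing with a Brownian bridge that avoids the boundary, then patching to all of $\bar D$ via Chapman--Kolmogorov and uniform continuity of $\Gamma$ on compacta. Your maximum-principle comparison, using that convexity forces $n(x)\cdot(y-x)\ge 0$ and hence $\partial_{n(x)}p_\eta\ge 0$, gives the clean inequality $p^r_\eta\ge p_\eta$ in one step and with the exact Gaussian constant. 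The only caveat is that Hopf's boundary lemma as usually stated needs an interior sphere condition, which a merely convex $D$ need not satisfy; you would need either to approximate $D$ from within by smooth convex domains (and use monotonicity of the Neumann kernel under such approximations) or to invoke a version of the comparison that holds on Lipschitz domains. This is routine but should be said.

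For the \emph{upper} bound on $p^r_\eta$, however, there is a genuine gap. The Schilder/contraction argument gives you an LDP for the law of $X^\eta_1$, i.e.\ asymptotics of $\bP^x(X^\eta_1\in B(y,\delta))$, not of the density $p^r_\eta(1,x,y)$. Passing from one to the other uniformly in $(x,y)$ and without losing the sharp constant $\tfrac12$ is exactly the hard analytic content of a Gaussian heat-kernel upper bound; invoking unspecified ``short-time Harnack/equicontinuity estimates'' to absorb the prefactor is circular, since those estimates are essentially what one is trying to prove. (Your pointwise identity $|\phi'|^2=|\psi'|^2+(l')^2$ also relies on $\psi'\cdot n=0$ whenever $dl>0$, which is not guaranteed for general absolutely continuous paths in the Skorokhod problem, so the rate-function identification needs more care too.) The paper sidesteps all of this by simply quoting Davies' Gaussian upper bound for the Neumann kernel on domains with the extension property (Proposition~\ref{prop:davies329}), which gives $p^r_\eta(1,x,y)\le c_{\delta,D}\exp(-|x-y|^2/(2(1+\delta)\eta))$ directly and uniformly. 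I would recommend you do the same: keep your maximum-principle argument for the lower bound, and replace the LDP route for the upper bound by a citation to the established heat-kernel estimate.
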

From results on the heat equation, we have the following \textbf{upper bound} on the transition function $p^r_\eta$; see \cite[p.\ 90]{davies1989heat}.
\begin{proposition}[Theorem 3.2.9 in \cite{davies1989heat}]\label{prop:davies329}
    If $D \subseteq \bR^d$ is a bounded region with the extension property, then the Neumann heat kernel $p^r_\eta(1, x, y)$ satisfies, for any $\delta > 0$, and some constant $c_{\delta, D}$,
    \begin{equation}
        0 \leq p^r_\eta(1, x, y) \leq c_{\delta, D}%
        \exp{\left\{-\frac{|x-y|^2}{2(1 + \delta)\eta}\right\}}.
    \end{equation}
\end{proposition}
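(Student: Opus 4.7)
The plan is to follow the classical \emph{Davies method} for Gaussian off-diagonal heat-kernel upper bounds, which is precisely how Theorem 3.2.9 of \cite{davies1989heat} is proved. The method has three ingredients: an on-diagonal (ultracontractivity) bound for the Neumann semigroup, an $L^2$-estimate on an exponentially twisted semigroup, and an optimization in the twisting parameter that converts these into the stated Gaussian off-diagonal decay. Throughout, write $P^\eta_t := \exp((\eta/2)t\Delta^N_D)$ for the Neumann heat semigroup, so that $p^r_\eta(t,x,y)$ is its integral kernel.

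First I would establish ultracontractivity. The extension property on $D$ transfers the Euclidean Sobolev embedding $W^{1,2}(\bR^d)\hookrightarrow L^{2d/(d-2)}(\bR^d)$ to $W^{1,2}(D)$, whence Nash's inequality holds on $D$. Applying the standard Nash argument to the positivity-preserving self-adjoint Markov semigroup $P^\eta_t$ yields
\begin{equation*}
\|P^\eta_t\|_{L^1(D)\to L^\infty(D)}\leq C_1(\eta t)^{-d/2},
\end{equation*}
so in particular $p^r_\eta(1,x,y)\leq C_1\eta^{-d/2}$ uniformly in $(x,y)\in\close{D}\times\close{D}$.

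Next I would run the Davies perturbation. For any Lipschitz $\psi:\close{D}\to\bR$ with $\|\nabla\psi\|_\infty\leq 1$ and any $\alpha\geq 0$, introduce the twisted operator $T^{(\alpha)}_t:= e^{-\alpha\psi}P^\eta_t e^{\alpha\psi}$, whose integral kernel is $e^{-\alpha\psi(x)}p^r_\eta(t,x,y)e^{\alpha\psi(y)}$. Differentiating $\|T^{(\alpha)}_tf\|_2^2$ in $t$ and integrating by parts---the boundary term vanishing thanks to the Neumann condition $n\cdot\nabla_y p^r_\eta=0$ in \eqref{eq:Fokker-Planck-reflected}---together with Cauchy-Schwarz gives the $L^2$-norm bound
\begin{equation*}
\|T^{(\alpha)}_t\|_{L^2(D)\to L^2(D)}\leq \exp(\eta\alpha^2 t/2).
\end{equation*}
Combining with the ultracontractivity of the previous step through the semigroup identity $T^{(\alpha)}_t=T^{(\alpha)}_{t/2}T^{(\alpha)}_{t/2}$ and the duality $(T^{(\alpha)}_t)^*=T^{(-\alpha)}_t$ upgrades this to an $L^1\to L^\infty$ bound on the twisted kernel, giving
\begin{equation*}
p^r_\eta(t,x,y)\leq C_1(\eta t)^{-d/2}\exp\!\bigl(\eta\alpha^2 t/2+\alpha(\psi(x)-\psi(y))\bigr).
\end{equation*}

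Finally, for each fixed $(x_0,y_0)$, I would pick $\psi$ close to $z\mapsto -|z-y_0|$ (which is $1$-Lipschitz) so that $\psi(x_0)-\psi(y_0)=-|x_0-y_0|$, specialize to $t=1$, and optimize the exponent over $\alpha>0$ at $\alpha=|x_0-y_0|/\eta$, producing a Gaussian decay factor $\exp(-|x_0-y_0|^2/(2\eta))$ multiplied by $C_1\eta^{-d/2}$. Carrying a $\delta$-slack through this optimization (writing $(1+\delta)$ in place of $1$ in the quadratic) reabsorbs the polynomial prefactor and yields the stated bound with the $\delta$-dependent, $\eta$-independent constant $c_{\delta,D}$. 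The main obstacle is the Davies perturbation computation itself: one must rigorously justify the integration by parts for the Neumann generator against the twisted test functions $e^{\pm\alpha\psi}f$, so that the boundary contributions vanish, and handle the fact that $\psi$ is only Lipschitz by a standard smooth approximation followed by passage to the limit. The extension property of $D$ is used only in the first step, where it underwrites the Sobolev and Nash inequalities on $D$ required for ultracontractivity.
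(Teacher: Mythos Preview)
The paper does not give its own proof of this proposition; it is quoted verbatim as Theorem~3.2.9 of \cite{davies1989heat} and simply cited. Your outline is exactly the Davies method used there---ultracontractivity from the extension-property Sobolev/Nash inequality, the exponentially twisted semigroup estimate with the Neumann boundary term vanishing, and optimization in $\alpha$---so in that sense your approach coincides with the intended one.

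There is, however, one genuine gap in your final step. After optimizing in $\alpha$ you obtain $p^r_\eta(1,x,y)\leq C_1\,\eta^{-d/2}\exp\{-|x-y|^2/(2\eta)\}$, and you then assert that a $\delta$-slack ``reabsorbs the polynomial prefactor'' into an $\eta$-independent $c_{\delta,D}$. That cannot work uniformly: at $x=y$ your bound reads $C_1\eta^{-d/2}$, which blows up as $\eta\downarrow 0$, so no $\eta$-free constant can dominate it regardless of how much slack you allow in the Gaussian. Davies' actual statement carries the $t^{-d/2}$ (here $\eta^{-d/2}$) prefactor explicitly; the version written in the paper is a slight imprecision in this respect. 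This does not damage the paper's downstream use in \eqref{eq:ub-to-liminf}: if one keeps the honest bound $c_{\delta,D}\,\eta^{-d/2}$, then $-\eta\log(c_{\delta,D}\,\eta^{-d/2})=-\eta\log c_{\delta,D}+\tfrac{d}{2}\eta\log\eta\to 0$, and the proof of Theorem~\ref{thm:reflected-ceta-uniform-convergence} goes through unchanged. So your argument is correct up to, and should stop at, the bound with the $\eta^{-d/2}$ factor; drop the absorption claim.
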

To prove the uniform convergence of \eqref{eq:c_eta-UB-LB-section} needed to apply Theorem \ref{thm:weak-type-LDP}, we require the corresponding \textbf{lower bound}. 
Establishing such a lower bound is the content of the remainder of this section. 
From \cite{lions1984stochastic}, we have that for the type of domain $D$ under consideration, $\Gamma$ is continuous (in fact with H\"{o}lder coefficient $1/2$) on compact sets of $\mathcal{C}_1$.
We will use this to show a lower bound for $p^r_\eta$. 
We start by obtaining a lower bound on the following family of compactly embedded convex sets $D^{-\ve}$ in $D$,
\begin{equation}
    D^{-\varepsilon} \coloneq \{x \in D: |x - b| > \varepsilon, \forall b \in \partial D \} \quad \varepsilon > 0.
\end{equation}

\begin{lemma}
    If $D \subseteq \bR^d$ is an open convex set, then $D^{-\varepsilon}$ is convex.
\end{lemma}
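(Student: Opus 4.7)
The plan is to reformulate $D^{-\varepsilon}$ in a form that makes convexity transparent, namely as a Minkowski-style ``erosion'' of $D$ by a closed ball.

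First, I would verify the key reformulation
\[
    D^{-\varepsilon} = \{ x \in \bR^d : \close{B(x, \varepsilon)} \subseteq D \},
\]
where $B(x, \varepsilon)$ is the open ball. The inclusion $\subseteq$ is direct: if $x \in D^{-\varepsilon}$, then $|x - b| > \varepsilon$ for every $b \in \partial D$, so $\partial D$ is disjoint from $\close{B(x, \varepsilon)}$; since $\close{B(x,\varepsilon)}$ is connected and contains $x \in D$, and $D$ is open, connectedness forces $\close{B(x, \varepsilon)} \subseteq D$. Conversely, if $\close{B(x,\varepsilon)} \subseteq D$, then no point $b \in \partial D$ lies in $\close{B(x, \varepsilon)}$ (since $\partial D \cap D = \emptyset$ as $D$ is open), hence $|x - b| > \varepsilon$ for all $b \in \partial D$.

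Next, given this reformulation, convexity of $D^{-\varepsilon}$ follows cleanly from convexity of $D$. Let $x, y \in D^{-\varepsilon}$ and $t \in [0,1]$, and set $z \coloneq tx + (1-t)y$. Any $w \in \close{B(z, \varepsilon)}$ may be written as $w = z + v$ with $|v| \leq \varepsilon$, so
\[
    w = t(x + v) + (1-t)(y + v),
\]
where $x + v \in \close{B(x, \varepsilon)} \subseteq D$ and $y + v \in \close{B(y, \varepsilon)} \subseteq D$. Convexity of $D$ gives $w \in D$, hence $\close{B(z, \varepsilon)} \subseteq D$ and $z \in D^{-\varepsilon}$.

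I do not anticipate a real obstacle here: the only subtlety is getting the strict-vs-nonstrict inequality right in the reformulation (which is why one uses the \emph{closed} ball in the equivalence and leverages openness of $D$). Once that is pinned down, the proof is a one-line Minkowski-sum-style argument, and no appeal to any deeper property of $D$ (boundedness, smoothness, the extension property) is needed beyond convexity.
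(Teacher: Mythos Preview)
Your proof is correct and uses essentially the same idea as the paper: both rely on the fact that for $x \in D^{-\varepsilon}$ the closed ball $\close{B(x,\varepsilon)}$ lies in $D$, and then translate $x$ and $y$ by the same vector to invoke convexity of $D$. The paper packages this as a proof by contradiction (taking the specific translation $v = b - a$ for an offending boundary point $b$), whereas you make the erosion reformulation $D^{-\varepsilon} = \{x : \close{B(x,\varepsilon)} \subseteq D\}$ explicit and argue directly; the underlying mechanism is identical.
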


\begin{proof}
    Assume the contrary, i.e. that for some $x, y \in D^{-\varepsilon}, b \in \partial D, t_a \in [0, 1]$, it holds that $a \coloneq t_a x + (1 - t_a)y$ satisfies
    $|a - b| \leq \varepsilon$, so that $a \notin D^{-\varepsilon}$.
    Then one has that $x_\varepsilon \coloneq x + (b - a)$ and $y_\varepsilon \coloneq y + (b - a)$ are contained in $D$, since $|x_\varepsilon - x| = |y_\varepsilon - y| = |b - a| \leq \varepsilon$ and $x, y \in D^{-\varepsilon}$.
    By convexity of $D$, it then holds that $a_\varepsilon \coloneq t_a x_\varepsilon + (1-t_a) y_\varepsilon \in D$.
    But $a_\varepsilon = t_a (x + (b - a)) + (1-t_a) (y + (b - a)) = b$, contradicting that $b \in \partial D$, since $D$ is a domain (i.e. open, connected).
\end{proof}

Let us review some notation. For $t>0$, let $\calC_t \coloneq C([0,t]: \bR^d)$ denote the space of continuous functions $[0,t] \to \bR^d$, endowed with the sup-norm: $d_{\calC_t}(f,g) \coloneq \sup_{s \in [0,t]} |f(s) - g(s)|$.
Consider an $\eta$-scaled Brownian motion starting in $x \in D$, $\{W^{\eta, x}\} = \{x + \sqrt{\eta} W_t\}$, and its reflected version $\Gamma W^{\eta, x}$. %
Define also the \emph{Brownian bridge} (BB) $B^{\eta, xy}$ as a process with the law of $W^{\eta, x}$ conditioned on ending in $y$ at time $t=1$, and let $B^\eta \coloneq B^{\eta, 00}$.
With $\sigma^{xy} \in \calC_1$ %
defined by $\sigma^{xy}(t) \coloneq x + t(y-x)$, note that $B^{\eta, xy} = B^{\eta} + \sigma^{xy}$.
Also, for general ending times $t > 0$, define $\sigma^{xy}_{0t} \in \calC_t$ by $\sigma^{xy}_{0t}(s) \coloneq \sigma^{xy}(s/t)$, and similarly $B^{\eta, xy}_{0t}$ and $B^{\eta}_{0t}$ as Brownian bridges ending at time $t$ (with the analogous property $B^{\eta, xy}_{0t} = B^{\eta}_{0t} + \sigma^{xy}_{0t}$).

The following Lemma is formally the result we want, but restricted to $D^{-\varepsilon}$.
The full result requires more work.

\begin{lemma}\label{lem:uniform-lb-D-eps}
    For any $\varepsilon > 0$, there exists $\eta_0 = \eta _0 (\varepsilon) > 0$ such that for any $\eta \leq \eta_0$, $x,y \in D^{-\varepsilon}$, we have
    \begin{equation}
        p^r_\eta(1, x, y) \geq \frac{1}{2} p_\eta(1, x, y).%
    \end{equation}
\end{lemma}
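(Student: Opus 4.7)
The plan is to compare the reflected process to its unreflected counterpart, using the elementary fact that when a Brownian path stays inside $\close{D}$ the Skorokhod map acts trivially, so the reflected and unreflected processes coincide on that event. First I would establish the following key inequality: by restricting to the event that $W^{\eta,x}$ never leaves $\close{D}$, and then disintegrating along the endpoint,
\begin{align*}
p^r_\eta(1,x,y)\, dy
&= \bP(\Gamma W^{\eta,x}(1) \in dy) \\
&\geq \bP\bigl(W^{\eta,x}_1 \in dy,\ W^{\eta,x}_s \in \close{D}\ \forall s \in [0,1]\bigr) \\
&= p_\eta(1,x,y)\, \bP\bigl(B^{\eta,xy}_s \in \close{D}\ \forall s \in [0,1]\bigr)\, dy.
\end{align*}
The justification of the middle inequality is that on the event $\{W^{\eta,x}_s \in \close{D}\ \forall s\}$, the pair $(W^{\eta,x}, 0)$ is a valid solution to the Skorokhod problem, so uniqueness forces $\Gamma W^{\eta,x} = W^{\eta,x}$. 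This reduces the lemma to a purely probabilistic statement about the unreflected Brownian bridge: it suffices to show that $\bP(B^{\eta,xy}_s \in \close{D}\ \forall s) \geq 1/2$ for all $x,y \in D^{-\varepsilon}$ and all small enough $\eta$.

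Next I would exploit the decomposition $B^{\eta,xy} = B^\eta + \sigma^{xy}$ together with the convexity of $D^{-\varepsilon}$ proved in the preceding lemma. Because $D^{-\varepsilon}$ is convex and contains both $x$ and $y$, the straight-line interpolation $\sigma^{xy}$ lies entirely in $D^{-\varepsilon}$, hence at distance at least $\varepsilon$ from $\partial D$ throughout $[0,1]$. Therefore, on the event $\{\sup_{s \in [0,1]} |B^\eta_s| < \varepsilon\}$, the bridge $B^{\eta,xy}$ remains inside $D \subseteq \close{D}$. This yields the uniform (in $x,y \in D^{-\varepsilon}$) lower bound
\[
\bP\bigl(B^{\eta,xy}_s \in \close{D}\ \forall s\bigr) \geq \bP\bigl(\sup_{s \in [0,1]} |B^\eta_s| < \varepsilon\bigr).
\]

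Finally, using the scaling $B^\eta \stackrel{d}{=} \sqrt{\eta}\, B^1$, I would observe
\[
\bP\bigl(\sup_{s \in [0,1]} |B^\eta_s| < \varepsilon\bigr) = \bP\bigl(\sup_{s \in [0,1]} |B^1_s| < \varepsilon/\sqrt{\eta}\bigr) \longrightarrow 1 \quad \text{as } \eta \downarrow 0,
\]
since $B^1$ has a.s.\ bounded paths. Choosing $\eta_0 = \eta_0(\varepsilon)$ so that this probability exceeds $1/2$ for all $\eta \leq \eta_0$ completes the proof. The only genuinely delicate ingredient is the trivial-reflection identity in the first step; everything else is a short application of convexity and Brownian scaling. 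Observe also that the bound $\eta_0$ depends only on $\varepsilon$ (through the tail of $\sup|B^1|$) and not on the individual points $x,y \in D^{-\varepsilon}$, which is exactly what is needed for the uniform transition-density bound targeted in subsequent arguments.
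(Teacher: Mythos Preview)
Your proof is correct and follows essentially the same approach as the paper: both restrict to the event that the unreflected path stays in $D$ (where the Skorokhod map acts trivially), further restrict to the event that the path lies in an $\varepsilon$-tube around the straight segment $\sigma^{xy}$ (using convexity of $D^{-\varepsilon}$), and then condition on the endpoint to reduce to the uniform bound $\bP(\sup_{s}|B^\eta_s|<\varepsilon)\to 1$. The only cosmetic difference is the order in which you disintegrate and restrict; the key ideas and estimates are identical.
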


\begin{remark}\label{rem:uniform-lb-D-eps}
    Note the key property that $\eta_0$ does not depend on $(x, y)$, only $\varepsilon$.
    Also, note that $t=1$ can be generalized to $t\in(0,1]$ by rescaling time, $\eta \to \eta t$, using the time-scale invariance of RBM and BM.
    Thus, it follows from Lemma \ref{lem:uniform-lb-D-eps} that for all $t \in (0,1]$, $p^r_\eta(t, x, y) \geq \frac{1}{2} p_\eta(t, x, y)$.
\end{remark}

\begin{proof}[Proof of Lemma \ref{lem:uniform-lb-D-eps}]
    Define the stopping time $\tau_D \coloneq \inf \{t \geq 0: W_t \notin D\}$.
    We get,
    \begin{equation}
    \begin{split}
        \bP(\Gamma W^{\eta, x}_1 \in dy) &\geq \bP(\Gamma W^{\eta, x}_1 \in dy, \tau_D > 1) \\
        &= \bP(W^{\eta, x}_1 \in dy, \tau_D > 1) \\
        &\geq \bP(W^{\eta, x}_1 \in dy, \restr{W^{\eta, x}}{[0,1]} \in B_{\calC_1}(\sigma^{xy}, \varepsilon)) \\
        &= \bP(W^{\eta, x}_1 \in dy) \bP\left(\restr{W^{\eta, x}}{[0,1]} \in B_{\calC_1}(\sigma^{xy}, \varepsilon) \mid W^{\eta, x}_1 = y\right) \\
        &= \bP(W^{\eta, x}_1 \in dy) \bP\left(\sup_{t \in [0,1]} |W^{\eta, x}_t - \sigma^{xy}_t| < \varepsilon \mid W^{\eta, x}_1 = y\right) \\
        \text{\{Brownian bridge\}}&= \bP(W^{\eta, x}_1 \in dy) \bP\left(\sup_{t \in [0,1]} |B^{\eta}_t| < \varepsilon\right).
    \end{split}
    \end{equation}
    The second factor converges to $1$ as $\eta \downarrow 0$, since the supremum norm of any scaled path $\sqrt{\eta} f \in \calC_1$ goes to zero with $\eta$.
    The estimate follows for small enough $\eta$.
\end{proof}

Next, we use Lemma \ref{lem:uniform-lb-D-eps} to obtain a lower bound on $p^r _\eta (1,x,y)$ that holds uniformly for $x,y \in \close{D}$. 
To ease notation, we take %
$\operatorname{diam}(D) = \sup_{x,y \in D} |x-y|$ to denote the (finite by assumption) diameter of the set $D$.%

\begin{proposition}\label{prop:uniform-lb}
    Take $\varepsilon \in (0,\frac{1}{2}\wedge \operatorname{diam}(D))$. Then, there exists $\alpha_D = \alpha_D(\varepsilon)> 0, \ \eta_0 = \eta_0 (\varepsilon) > 0$, and $\beta_D >0 $, such that for any $x, y \in \close{D}$ and $\eta \leq \eta_0$,
    \begin{equation}
        p^r_\eta(1, x, y) \geq \alpha_D \exp{-\left\{\frac{|x - y|^2 + \beta_D \varepsilon}{2 \eta (1 - \varepsilon)}\right\}}.
    \end{equation}
\end{proposition}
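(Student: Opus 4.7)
The plan is to reduce the boundary case to the interior case already covered by Lemma~\ref{lem:uniform-lb-D-eps} via a three-step Chapman--Kolmogorov decomposition: two short transitions of length $\varepsilon/2$ shuttle $x,y$ from $\close{D}$ into the $\varepsilon/2$-interior $D^{-\varepsilon/2}$, and a long middle transition of length $1-\varepsilon$ on interior endpoints invokes Lemma~\ref{lem:uniform-lb-D-eps} (in its time-rescaled form from Remark~\ref{rem:uniform-lb-D-eps}).

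First I would establish a key geometric fact. Since $D$ is open, bounded and convex with non-empty interior, fix $\varepsilon_0 > 0$ small enough that $D^{-2\varepsilon_0} \neq \emptyset$ and pick $p_0 \in D^{-2\varepsilon_0}$. For $\varepsilon \in (0, \varepsilon_0)$ and any $x \in \close{D}$, define
\[
    \bar{x} \;:=\; x + \tfrac{\varepsilon}{\varepsilon_0}(p_0 - x), \qquad \bar{y} \text{ analogously.}
\]
A convexity argument, analogous to the proof that $D^{-\varepsilon}$ is convex given earlier, shows that the open ball of radius $2\varepsilon$ around $\bar{x}$ lies in $D$ while $|\bar{x}-x| \leq C_D\varepsilon$, where $C_D := \operatorname{diam}(D)/\varepsilon_0$ depends only on $D$. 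In particular, the open ball of radius $\varepsilon$ around $\bar{x}$ lies in $D^{-\varepsilon/2}$.

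Next, by Chapman--Kolmogorov and restriction of the integration to $u,v$ in these balls,
\begin{equation*}
    p^r_\eta(1,x,y) \;\geq\; \iint p^r_\eta(\varepsilon/2, x, u)\, p^r_\eta(1-\varepsilon, u, v)\, p^r_\eta(\varepsilon/2, v, y)\, du\, dv.
\end{equation*}
For the middle factor, Lemma~\ref{lem:uniform-lb-D-eps} applied with $\varepsilon/2$ in place of $\varepsilon$ together with Remark~\ref{rem:uniform-lb-D-eps} gives, for $\eta$ sufficiently small,
\[
    p^r_\eta(1-\varepsilon, u, v) \;\geq\; \tfrac{1}{2}\bigl(2\pi\eta(1-\varepsilon)\bigr)^{-d/2} \exp\!\left(-\tfrac{|u-v|^2}{2\eta(1-\varepsilon)}\right).
\]
Since $|u-v| \leq |x-y| + 2(C_D+1)\varepsilon$ and $|x-y| \leq \operatorname{diam}(D)$, expanding the square yields $|u-v|^2 \leq |x-y|^2 + \beta_1\varepsilon$ with $\beta_1 = \beta_1(D)$, producing the main Gaussian term of the claimed bound together with a correction $\beta_1\varepsilon/[2\eta(1-\varepsilon)]$ in the exponent.

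The hardest step, and the main obstacle, is the short-time lower bound on the outer factors $p^r_\eta(\varepsilon/2, x, u)$ when $x \in \close{D}$ may lie on $\partial D$. The stopping-time argument of Lemma~\ref{lem:uniform-lb-D-eps} breaks down, since $\bP(\tau_D > 0) = 0$ for such $x$. My plan is to invoke a short-time Aronson-type lower bound on the Neumann heat kernel for bounded convex domains,
\[
    p^r_\eta(\varepsilon/2, x, u) \;\geq\; c_1(D)\,\eta^{-d/2} \exp\!\left(-\tfrac{c_2(D)|x-u|^2}{\eta\varepsilon}\right),
\]
a classical consequence of Nash/Poincar\'{e}--Sobolev estimates for the Neumann Laplacian on Lipschitz domains (convexity suffices). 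An elementary alternative is to compare $p^r_\eta$ locally, near the projection of $x$ onto $\partial D$, with the explicit half-space kernel $p_+$ of \eqref{eq:transition_density_halfinfinite} via a supporting hyperplane at that point; the proof of Lemma~\ref{lem:uniform-lb-D-eps} then transplants to the half-space, where the reflection principle makes the density at interior points at least the unreflected Gaussian. Because $|x-u| \leq (C_D+1)\varepsilon$, each outer factor contributes $\exp(-O(\varepsilon/\eta))$, which combines with the middle-factor correction into a single additive $\beta_D\varepsilon/[2\eta(1-\varepsilon)]$ term in the exponent, with $\beta_D$ depending only on $D$. The Euclidean volumes of the two balls together with the $\eta^{-d/2}$ factors supply the $\varepsilon$-dependent prefactor $\alpha_D(\varepsilon)$.
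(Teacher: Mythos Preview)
Your Chapman--Kolmogorov strategy with interior bridge points is close in spirit to the paper's, though the paper uses two legs (time $s$ then $1-s$, with the bridge point $z$ on the straight segment from $x$ to $y$) and first treats the asymmetric case $x\in\close{D}$, $y\in D^{-\varepsilon}$, then bootstraps to general $x,y$ via symmetry of $p^r_\eta$ and a second midpoint argument. The real divergence is in how the short leg from a boundary point $x$ to an interior ball is controlled. The paper does \emph{not} invoke any heat-kernel lower bound here; instead it works directly on path space: it restricts to the relatively compact set $A^s_{\delta_0}\subset\calC_s$ of paths with modulus of continuity $\leq 2\sqrt{2\delta\log(1/\delta)}$, uses that the Skorokhod map $\Gamma$ is uniformly continuous on $A^s_{\delta_0}$ (so $\Gamma$ maps a small $\calC_s$-ball around the straight segment $\sigma_{0s}^{xz}$ into the $\varepsilon/3$-ball), and then shows that the Brownian bridge lands in this small path-ball and in $A^s_{\delta_0}$ with probability at least $1/2$ for small $\eta$. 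This yields the first-leg lower bound without any PDE input, which is the paper's main technical contribution in this section.

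Your Aronson route is logically sound---a Gaussian Neumann lower bound up to the boundary on bounded Lipschitz (hence convex) domains does follow from the parabolic Harnack inequality---and since you only use it on the $\varepsilon/2$-legs where $|x-u|=O(\varepsilon)$, the non-sharp constant $c_2$ is harmlessly absorbed into $\beta_D$. But this imports machinery at least as heavy as the proposition itself, whereas the paper's argument is self-contained from the Hölder continuity of $\Gamma$ established in \cite{lions1984stochastic}. Your half-space alternative, however, has a genuine gap: knowing $p^r_{H,\eta}\geq p_\eta$ for the supporting half-space $H\supset D$ says nothing about $p^r_{D,\eta}$, since there is no general domain monotonicity for Neumann kernels (reflected mass from $\partial D$ and from $\partial H$ land in different places), and ``transplanting'' Lemma~\ref{lem:uniform-lb-D-eps} to $H$ does not bound the $D$-kernel. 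Making this rigorous would require a localization/comparison argument near the boundary that you have not supplied.
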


\begin{proof}%
    We start by proving a lower bound for $p ^r _\eta (1,x,y)$ when $y \in D^{-\varepsilon}$ and $x \in \close{D}$. 
    Take $s = 1 - \frac{\varepsilon}{1 \lor 3\operatorname{diam}(D)}$ %
    so that    
    $(1-s)|x - y| < \varepsilon / 3$ (note also that $s \geq \frac{1}{2}$, which will be used later). 
    Letting $z \coloneq x + s(y - x)$, this means that $z \in D^{-2\varepsilon / 3}$, and $B_{\bR^d}(z, \varepsilon / 3) \subseteq D^{-\varepsilon / 3}$.
    $z$ will be used as a ``bridge point'' between $x$ and $y$.
    By a Chapman-Kolmogorov equation restricted to a ball, we get

    \begin{equation}\label{eq:uniform-lb-eq-1}
    \begin{split}
        \bP(\Gamma W^{\eta, x}_1 \in dy) &\geq \bP(\Gamma W^{\eta, x}_1 \in dy, \Gamma W^{\eta, x}_s \in B_{\bR^d}(z, \varepsilon / 3)) \\
        &= \int_{z' \in B_{\bR^d}(z, \varepsilon / 3)} \bP(\Gamma W^{\eta, x}_1 \in dy \mid W_s = z') \bP(\Gamma W^{\eta, x}_s = dz') \\
    \end{split}
    \end{equation}%
    By the time-homogeneous Markov property of RBM, the conditional probability in the last line above is
    \begin{equation}\label{eq:uniform-lb-eq-2}
        \bP(\Gamma W^{\eta, x}_1 \in dy \mid W_s = z') = \bP(\Gamma W^{\eta, z'}_{1-s} \in dy),
    \end{equation}
    \noindent and for all $\eta \leq \eta_0(\varepsilon / 3)$, according to Lemma \ref{lem:uniform-lb-D-eps}, this is bounded below by $\frac{1}{2} p_\eta((1-s), z', y) dy = \frac{1}{2} (2\pi \eta (1-s))^{-d/2} \exp{-\frac{|z' - y|^2}{2\eta(1-s)}} dy$ (see Remark \ref{rem:uniform-lb-D-eps}). 
    Inside $B_{\bR^d}(z, \varepsilon / 3)$, this is in turn bounded by $\frac{1}{2} (2\pi \eta (1-s))^{-d/2} \exp{-\frac{(|z - y| + \varepsilon / 3)^2}{2\eta(1-s)}} dy$, which does not depend on the integration variable $z'$.
    Continuing on \eqref{eq:uniform-lb-eq-1} gives
    \begin{equation}\label{eq:uniform-lb-eq-3}
    \begin{split}
        &\bP(\Gamma W^{\eta, x}_1 \in dy) \\ 
        &\geq \frac{1}{2} \frac{1}{(2\pi \eta (1-s))^{d/2}} \exp{-\frac{(|z - y| + \varepsilon / 3)^2}{2\eta(1-s)}} \bP\Big(\Gamma W^{\eta, x}_s \in B_{\bR^d}(z, \varepsilon / 3)\Big) dy.
    \end{split}
    \end{equation}
    The probability appearing last in \eqref{eq:uniform-lb-eq-3} can be bounded via, as in the proof of Lemma \ref{lem:uniform-lb-D-eps}, taking open balls in $\calC_s$.
    However, one cannot assume that $\Gamma W^{\eta, x} = W^{\eta, x}$ within these balls, since $x$ may be arbitrarily close to the boundary.
    Instead, we will use that %
    $\Gamma$ is uniformly continuous on (relatively) compact sets of $\calC_s$.
    For $t \in (0, 1]$, let $A^t_{\delta_0}$ be the relatively compact set %
    in $\calC_t$ given by 
    \begin{equation}
        A^t_{\delta_0} \coloneq \{f \in \calC_t: 
        f(0) \in \close{D}, 
        m^t(f, \delta) \leq 2 g(\delta) \, \forall \delta \leq \delta_0\},
    \end{equation}
    for some $\delta_0 > 0$ where 
    \(m^t(f, \delta) \coloneq \max_{\substack{%
    t_1, t_2 \in [0, t], |t_1-t_2| \leq \delta}} |f(t_1) - f(t_2)|\)
    denotes the \emph{modulus of continuity} on $[0,t]$, and $g(\delta) = \sqrt{2 \delta \log(1 / \delta)}$, see \cite[p. 62, and p. 114]{KS08}.
    Then $\Gamma$ is uniformly continuous on $A^t_{\delta_0}$.
    Let $\varepsilon^\calC_{\delta_0}$ be small enough so that $\Gamma(B_{\calC_1}(f, \varepsilon^\calC_{\delta_0}) \cap A_{\delta_0}) \subseteq B_{\calC_1}(f, \varepsilon / 3)$ %
    for each $f \in A_{\delta_0}$. %
    Then,
    
    \begin{equation}\label{eq:uniform-lb-eq-4}
    \begin{split}
        &\bP\Big(\Gamma W^{\eta, x}_s \in B_{\bR^d}(z, \varepsilon / 3)\Big) \\
        &\geq \bP\Big(\restr{\Gamma W^{\eta, x}}{[0, s]} \in B_{\calC_s}(\sigma_{0s}^{xz}, \varepsilon / 3)\Big) \\
        &\geq \bP\Big(\restr{\Gamma W^{\eta, x}}{[0, s]} \in B_{\calC_s}(\sigma_{0s}^{xz}, \varepsilon / 3), \restr{W^{\eta, x}}{[0, s]} \in A^s_{\delta_0}\Big) \\
        &\geq \bP\Big(\restr{W^{\eta, x}}{[0, s]} \in B_{\calC_s}(\sigma_{0s}^{xz}, \varepsilon^\calC_{\delta_0}) \cap A^s_{\delta_0}\Big) \\
        &\geq \int_{B_{\bR^d}(z, \varepsilon^\calC_{\delta_0} / 2)} 
        \bP\left(W^{\eta, x}_s \in dz'\right)
        \bP\left(\restr{W^{\eta, x}}{[0, s]} \in B_{\calC_s}(\sigma_{0s}^{xz}, \varepsilon^\calC_{\delta_0}) \cap A^s_{\delta_0} \Bigm\vert W^{\eta, x}_s = z' \right). \\
    \end{split}
    \end{equation}
    The last conditional probability may be bounded below by a union bound.
    \begin{equation}\label{eq:uniform-lb-eq-5}
    \begin{split}
        &\bP\left(\restr{W^{\eta, x}}{[0, s]} \in B_{\calC_s}(\sigma_{0s}^{xz}, \varepsilon^\calC_{\delta_0}) \cap A^s_{\delta_0} \Bigm\vert W^{\eta, x}_s = z' \right) \\
        &\geq 1 - \bP\left(\restr{W^{\eta, x}}{[0, s]} \notin B_{\calC_s}(\sigma_{0s}^{xz}, \varepsilon^\calC_{\delta_0}) \Bigm\vert W^{\eta, x}_s = z'\right) -
        \bP\left(\restr{W^{\eta, x}}{[0, s]} \notin A^s_{\delta_0} \Bigm\vert W^{\eta, x}_s = z' \right) \\
    \end{split}
    \end{equation}
    Since the integral (over $z'$) in \eqref{eq:uniform-lb-eq-4} was restricted to $B_{\bR^d}(z, \varepsilon^\calC_{\delta_0} / 2)$, we have that $B_{\calC_s}(\sigma_{0s}^{xz'}, \varepsilon^\calC_{\delta_0} / 2) \subseteq B_{\calC_s}(\sigma_{0s}^{xz}, \varepsilon^\calC_{\delta_0})$ for any relevant $z'$.
    Then the first negative term of the right-hand side in \eqref{eq:uniform-lb-eq-5} is bounded by
    \begin{equation}\label{eq:uniform-lb-eq-6}
    \begin{split}
        &\bP\left(\restr{W^{\eta, x}}{[0, s]} \notin B_{\calC_s}(\sigma_{0s}^{xz}, \varepsilon^\calC_{\delta_0}) \Bigm\vert W^{\eta, x}_s = z'\right) \\
        &\leq \bP\left(\restr{W^{\eta, x}}{[0, s]} \notin B_{\calC_s}(\sigma_{0s}^{xz'}, \varepsilon^\calC_{\delta_0} / 2) \Bigm\vert W^{\eta, x}_s = z'\right) \\
        &= \bP\left(\sup_{t \in [0,s]} |W^{\eta, x}_t - \sigma_{0s}^{xz'}(t) | \geq \varepsilon^\calC_{\delta_0} / 2 \Bigm\vert W^{\eta, x}_s = z'\right) \\
        &= \bP\left(\sup_{t \in [0,s]} |W^{\eta, 0}_t| \geq \varepsilon^\calC_{\delta_0} / 2 \Bigm\vert W^{\eta, x}_s = 0\right),
    \end{split}
    \end{equation}
    which goes to $0$, again since the norm of any scaled path $\sqrt{\eta} f$ does. %
    To see that the second negative term of \eqref{eq:uniform-lb-eq-5} also goes to $0$, let $L_{\mathrm{max}} \coloneq 2 \operatorname{diam}(D)$ %
    and take $\delta_0 \coloneq e^{-1} \land L_{\mathrm{max}}^{-2}$. %
    We have chosen $s$ to be greater than $\frac{1}{2}$, %
    meaning that $|\sigma_{0s}^{xz'}(t_1) - \sigma_{0s}^{xz'}(t_2)| \leq g(|t_1 - t_2|)$, whenever $|t_1 - t_2| \leq \delta_0$.
    Then we get  
    \begin{equation}\label{eq:uniform-lb-eq-7}
    \begin{split}
        &\bP\left(\restr{W^{\eta, x}}{[0, s]} \notin A^s_{\delta_0} \Bigm\vert W^{\eta, x}_s = z' \right) \\
        &= \bP\left(B^{\eta, xz'}_{0s} \notin A^s_{\delta_0}\right) \\
        &= \bP\left(B^{\eta}_{0s} + \sigma_{0s}^{xz'} \notin A^s_{\delta_0}\right) \\
        &= \bP\left(\max_{\substack{t_1, t_2 \in [0, s] \\ |t_1-t_2| \leq \delta_0}} |(B^{\eta}_{0s} + \sigma_{0s}^{xz'})(t_1) - (B^{\eta}_{0s} + \sigma_{0s}^{xz'})(t_2)| > 2g(|t_1 - t_2|) \right) \\
        &\leq \bP\left(\max_{\substack{t_1, t_2 \in [0, s] \\ |t_1-t_2| \leq \delta_0}} |B_{0s}{\eta}(t_1) - B^{\eta}_{0s}(t_2)| + |\sigma_{0s}^{xz'}(t_1) - \sigma_{0s}^{xz'}(t_2)| > 2g(|t_1 - t_2|) \right) \\
        &\leq \bP\left(\max_{\substack{t_1, t_2 \in [0, s] \\ |t_1-t_2| \leq \delta_0}} |B_{0s}^{\eta}(t_1) - B_{0s}^{\eta}(t_2)| > g(|t_1 - t_2|) \right) \\ & \quad + \bP\left(\max_{\substack{t_1, t_2 \in [0, s] \\ |t_1-t_2| \leq \delta_0}} |\sigma_{0s}^{xz'}(t_1) - \sigma_{0s}^{xz'}(t_2)| > g(|t_1 - t_2|) \right) \\
    \end{split}
    \end{equation}
    The second term is zero.
    The first term goes to zero as $\eta \to 0$ since $g$ is (a multiple of) an exact modulus for a standard Brownian bridge and Brownian motion. %

    Returning to \eqref{eq:uniform-lb-eq-5}, we now have
    \begin{equation}\label{eq:uniform-lb-eq-8}
        \bP\left(\restr{W^{\eta, x}}{[0, s]} \in B_{\calC_s}(\sigma_{0s}^{xz}, \varepsilon^\calC_{\delta_0}) \cap A^s_{\delta_0} \Bigm\vert W^{\eta, x}_s = z' \right) \geq \frac{1}{2},
    \end{equation}
    for sufficiently small $\eta$.
    \eqref{eq:uniform-lb-eq-4} then gives
    \begin{equation}\label{eq:uniform-lb-eq-9}
    \begin{split}
        &\bP\Big(\Gamma W^{\eta, x}_s \in B_{\bR^d}(z, \varepsilon / 3)\Big) \\
        &\geq \frac{1}{2} \int_{B_{\bR^d}(z, \varepsilon^\calC_{\delta_0} / 2)} 
        \bP\left(W^{\eta, x}_s \in dz'\right) \\
        &\geq \frac{1}{2} \frac{1}{(2\pi \eta s)^\frac{d}{2}} \exp{-\frac{(|x - z| + \varepsilon^\calC_{\delta_0})^2}{2 \eta s}} \operatorname{vol}_{\bR^d}\Big(B_{\bR^d}\big(0, \frac{\varepsilon^\calC_{\delta_0}}{2}\big)\Big).
    \end{split}
    \end{equation}
    Inserting this into \eqref{eq:uniform-lb-eq-3} yields for an appropriate constant $\alpha$,
    \begin{equation}\label{eq:uniform-lb-eq-10}
    \begin{split}
        \bP(&\Gamma W^{\eta, x}_1 \in dy) \\
        \geq & \,\frac{1}{2} \frac{1}{(2\pi \eta (1-s))^{d/2}} \exp{-\frac{(|z - y| + \varepsilon / 3)^2}{2\eta(1-s)}} \\
        &\times \frac{1}{2} \frac{1}{(2\pi \eta s)^{d/2}} \exp{-\frac{(|x - z| + \varepsilon^\calC_{\delta_0})^2}{2 \eta s}} \operatorname{vol}_{\bR^d}\Big(B_{\bR^d}\big(0, \frac{\varepsilon^\calC_{\delta_0}}{2}\big)\Big) \\
        = & \,\alpha \eta^{-d} \exp{-\left\{\frac{(|z - y| + \varepsilon / 3)^2}{2\eta(1-s)} + \frac{(|x - z| + \varepsilon^\calC_{\delta_0})^2}{2 \eta s}\right\}}.
    \end{split}
    \end{equation}
    By only considering $\eta_0 \leq 1$, we may further drop the factor $\eta^{-d}$ from the final expression in \eqref{eq:uniform-lb-eq-10}.
    Introducing another appropriate constant $\beta$ in the exponent, and assuming w.l.o.g. that $\varepsilon^\calC_{\delta_0} \leq \varepsilon \leq 1 / 2$, we can further simplify the lower bound.%
    \begin{equation}\label{eq:uniform-lb-eq-11}
    \begin{split}
        \bP(\Gamma W^{\eta, x}_1 \in dy) &\geq \alpha {} \exp{-\left\{\frac{(|z - y| + \varepsilon / 3)^2}{2\eta(1-s)} + \frac{(|x - z| + \varepsilon^\calC_{\delta_0})^2}{2 \eta s}\right\}} \\
        &\geq \alpha {} \exp{-\left\{\frac{(\varepsilon)^2}{2\eta(1-s)} + \frac{(|x - z| + \varepsilon^\calC_{\delta_0})^2}{2 \eta s}\right\}} \\
        &\geq \alpha {} \exp{-\left\{\frac{(\varepsilon)^2}{2\eta\frac{\varepsilon}{1 \lor 3 \operatorname{diam}(D)}} + \frac{|x - z|^2 + 2 \operatorname{diam}(D) \varepsilon^\calC_{\delta_0} + (\varepsilon^\calC_{\delta_0})^2}{2 \eta (1 - \frac{\varepsilon}{1 \lor 3 \operatorname{diam}(D)})}\right\}} \\
        &= \alpha {} \exp{-\left\{\frac{(1 \lor 3 \operatorname{diam}(D))\varepsilon}{2\eta} + \frac{|x - z|^2 + 2 \operatorname{diam}(D) \varepsilon^\calC_{\delta_0} + (\varepsilon^\calC_{\delta_0})^2}{2 \eta (1 - \frac{\varepsilon}{1 \lor 3 \operatorname{diam}(D)})}\right\}} \\
        &\geq \alpha {} \exp{-\left\{\frac{(1 \lor 3 \operatorname{diam}(D))\varepsilon}{2\eta (1 - \varepsilon)} + \frac{|x - z|^2 + 2 \operatorname{diam}(D) \varepsilon^\calC_{\delta_0} + (\varepsilon^\calC_{\delta_0})^2}{2 \eta (1 - \varepsilon)}\right\}} \\
        &\geq \alpha \exp{-\left\{\frac{|x - z|^2 + \beta \varepsilon}{2 \eta (1 - \varepsilon)}\right\}} \\
        &\geq \alpha {} \exp{-\left\{\frac{|x - y|^2 + \beta \varepsilon}{2 \eta (1 - \varepsilon)}\right\}}
    \end{split}
    \end{equation}
    This gives the desired expression, although only when $x \in \close{D}, y \in D^{-\varepsilon}$.
    By symmetry of the transition function, it also holds when $x \in D^{-\varepsilon}, y \in \close{D}$.
    This symmetry may be seen by noting that a solution to the PDE \eqref{eq:Fokker-Planck-reflected}, also satisfies the Kolmogorov backward version of it, when switching $x$ and $y$. %

    For $x, y$ arbitrarily chosen in $\close{D}$, let $x^{\varepsilon'} \coloneq \operatorname{proj}_{\close{D^{-\varepsilon'}}}(x), y^{\varepsilon'} \coloneq \operatorname{proj}_{\close{D^{-\varepsilon'}}}(y)$ and set $z \coloneq \frac{1}{2} x^{\varepsilon'} + \frac{1}{2} y^{\varepsilon'} \in \close{D^{-\varepsilon'}} \subseteq D^{-\varepsilon' / 2}$.
    Here, $\varepsilon' \leq \varepsilon$ is chosen so that $\operatorname{dist}(x, \close{D^{-\varepsilon'}}) \leq \varepsilon$.
    Then, using a Chapman-Kolmogorov argument,
    \begin{equation}\label{eq:uniform-lb-eq-12}
    \begin{split}
        p^r_\eta(1, x, y) &\geq \int_{B_{\bR^d}(z, \varepsilon' / 2)} p^r_\eta(1/2, x, z') p^r_\eta(1/2, z', y) dz' \\
        &\geq \int_{B_{\bR^d}(z, \varepsilon' / 2)} \alpha {} \exp{-\left\{\frac{|x - z'|^2 + \beta \varepsilon'}{2 \eta (1/2) (1 - \varepsilon')}\right\}} \times \alpha {} \exp{-\left\{\frac{|z' - y|^2 + \beta \varepsilon'}{2 \eta (1/2) (1 - \varepsilon')}\right\}} dz' \\
        &= \alpha^2 {} \int_{B_{\bR^d}(z, \varepsilon' / 2)} \exp{-\left\{\frac{|x - z'|^2 + |z' - y|^2 + 2 \beta \varepsilon'}{\eta (1 - \varepsilon')}\right\}} dz' \\
        &\geq \alpha^2 {} \int_{B_{\bR^d}(z, \varepsilon' / 2)} \exp{-\bigg\{}\\&\quad\quad\frac{(|x - x^{\varepsilon'}| + |x^{\varepsilon'} - z'|)^2 + (|z' - y^{\varepsilon'}| + |y^{\varepsilon'} - y|)^2 + 2 \beta \varepsilon'}{\eta (1 - \varepsilon')}\bigg\} dz' \\
        &\geq \alpha^2 {} \int_{B_{\bR^d}(z, \varepsilon' / 2)} \exp{-\left\{\frac{(|x^{\varepsilon'} - z'| + \varepsilon)^2 + (|z' - y^{\varepsilon'}| + \varepsilon)^2 + 2 \beta \varepsilon'}{\eta (1 - \varepsilon')}\right\}} dz' \\
    \end{split}
    \end{equation}
    Now since, within the domain of integration, $|x^{\varepsilon'} - z'| \leq |x^{\varepsilon'} - z| + \frac{1}{2} \varepsilon' \leq \frac{1}{2}|x-y| + \frac{1}{2} \varepsilon' \leq \frac{1}{2}|x-y| + \frac{1}{2} \varepsilon$, and similarly for $y^{\varepsilon'}$, we have 
    \begin{equation}\label{eq:uniform-lb-eq-13}
    \begin{split}
        p^r_\eta(1, x, y) &\geq \alpha^2 {} \int_{B_{\bR^d}(z, \varepsilon' / 2)} \exp{-\left\{\frac{(\frac{1}{2}|x - y| + 2 \varepsilon)^2 + (\frac{1}{2}|x - y| + 2 \varepsilon)^2 + 2 \beta \varepsilon'}{\eta (1 - \varepsilon')}\right\}} dz' \\
        &= \alpha^2 {} \int_{B_{\bR^d}(z, \varepsilon' / 2)} \exp{-\left\{\frac{(|x - y| + 4 \varepsilon)^2 + 4 \beta \varepsilon'}{2 \eta (1 - \varepsilon')}\right\}} dz' \\
        &\geq \alpha^2 {} \int_{B_{\bR^d}(z, \varepsilon' / 2)} \exp{-\left\{\frac{(|x - y| + 4 \varepsilon)^2 + 4 \beta \varepsilon}{2 \eta (1 - \varepsilon)}\right\}} dz' \\
        &= \alpha^2 \operatorname{vol}_{\bR^d}\Big(B_{\bR^d}\big(0, \varepsilon' / 2\big)\Big) {} \exp{-\left\{\frac{(|x - y| + 4 \varepsilon)^2 + 4 \beta \varepsilon}{2 \eta (1 - \varepsilon)}\right\}} \\
        &\geq \tilde{\alpha} {} \exp{-\left\{\frac{|x - y|^2 + 8 \varepsilon \operatorname{diam}(D) + 16\varepsilon^2 + 4 \beta \varepsilon}{2 \eta (1 - \varepsilon)}\right\}} \\
        &\geq \tilde{\alpha} {} \exp{-\left\{\frac{|x - y|^2 + \tilde{\beta} \varepsilon}{2 \eta (1 - \varepsilon)}\right\}},
    \end{split}
    \end{equation}
    for appropriately chosen $\tilde{\alpha}, \tilde{\beta}$.
    This proves the claim.
\end{proof}
With Propositions \ref{prop:davies329} and \ref{prop:uniform-lb} in hand, it only remains to combine them to reach the main goal of this section: a proof of Theorem \ref{thm:reflected-ceta-uniform-convergence}.

\begin{proof}[Proof of Theorem \ref{thm:reflected-ceta-uniform-convergence}]
Take $\xi > 0$.
From the uniform lower bound, we have that for each $\varepsilon \in (0, \frac{1}{2})$, there is an $\eta_0 > 0$ such that for all $x,y \in \close{D}$,
\begin{equation}\label{eq:lb-to-limsup}
\begin{split}
    -\eta \log p^r_\eta(1, x, y) &\leq -\eta \log\left(\alpha_D \exp{-\left\{\frac{|x - y|^2 + \beta_D \varepsilon}{2 \eta (1 - \varepsilon)}\right\}}\right) \\
    &= \frac{|x - y|^2 + \beta_D \varepsilon}{2 (1 - \varepsilon)} -\eta \log\alpha_D.  \\
\end{split}
\end{equation}
The first term can be set arbitrarily close to $\frac{|x-y|^2}{2}$ by taking $\varepsilon$ small, and the second goes to zero with $\eta$.
Hence, this upper bound converges uniformly in the sense that for small enough $\eta$, it holds for all $x,y \in \close{D}$ that
\begin{equation}\label{eq:eta-log-reflected-ub}
    -\eta \log p^r_\eta(1, x, y) \leq \frac{|x-y|^2}{2} + \xi.
\end{equation}

Correspondingly, from the uniform lower bound in Proposition \ref{prop:davies329}, we have
\begin{equation}\label{eq:ub-to-liminf}
\begin{split}
    -\eta \log p^r_\eta(1, x, y) &\geq -\eta \log\left(c_{\delta, D} \exp{\left\{-\frac{|x-y|^2}{2(1 + \delta)\eta}\right\}}\right) \\
    &= \frac{|x-y|^2}{2(1 + \delta)} -\eta \log c_{\delta, D}. \\
\end{split}
\end{equation}
Since $\delta$ can be chosen small, and the second term goes to zero, 
we have for small enough $\eta$ that $-\eta \log p^r_\eta(1, x, y) \geq \frac{|x-y|^2}{2} - \xi$.
Together with the upper bound \eqref{eq:eta-log-reflected-ub}, this shows the uniform convergence, since $\xi > 0$ is arbitrary.
\end{proof}

As mentioned earlier, the connectedness of $\interior{D}$, and the nice differentiability properties of $c(x,y) = \frac{1}{2} |x-y|^2$, mean that Assumption \ref{ass:bernton44mod}, used in Theorem \ref{thm:weak-type-LDP}, is also fulfilled.
Having now proved Theorem \ref{thm:reflected-ceta-uniform-convergence}, in this section we have shown that the reflected Brownian SBs on $D$ satisfy the large deviation principle in Theorem \ref{thm:weak-type-LDP} and the strengthened version Corollary \ref{cor:katocor31mod}.

\section{Discussion and future work}

We have shown, in Section \ref{sec:ldp-uniform-convergent-cost}, an LDP for families of static SBs with a scaling parameter $\eta$, that satisfy a simple convergence criterion.
In Section \ref{sec:small-noise-reflected-browian-motion}, we give an example of such a scaled family from the field of generative modeling, namely the reflected Brownian SBs used in \cite{deng2024reflected}. 
This gives a partial positive answer to one of the two open questions posed at the end of the introduction in \cite{bernton2022entropic}, asking whether their large deviation results for $\varepsilon$-regularized EOT plans can be extended to SBs for general sequences $\{ R_\eta \}_{\eta > 0}$.
It is still an open question how much further this principle can be generalized.
The uniform convergence mode of $c_{\eta} \overset{}{\to} c$ may perhaps be weakened to a more permissive one; an immediate generalization, on locally compact spaces, is uniform convergence on compact subsets of $\calX \times \calY$.
Additionally, other types of dynamics --- SDEs with drifts and/or jumps, reflections on more complicated domains, etc. --- with a scaling parameter $\eta$, could be considered and analyzed similar to Section~\ref{sec:small-noise-reflected-browian-motion}.

Another line of future work is to establish weaker conditions under which the \textbf{dynamic} SBs also follow a large deviation principle on the path space $\calC_1$.
Research in this direction has recently started with \cite{Kato24}, where scaled Brownian motion is considered as the reference dynamics.
The proof does not generalize to much more involved dynamics, as it relies heavily on the Gaussianity of the bridge measures $R_\eta^{\cdot}$ on $\calC_1$.
In particular, it seems difficult to adopt to something like reflected Brownian motion.

\section*{Declarations}

\subsection*{Funding}

The research of VN and PN was supported by Wallenberg AI, Autonomous Systems and Software Program (WASP) funded by the Knut and Alice Wallenberg Foundation. The research of PN was also supported by the Swedish Research Council (VR-2018-07050, VR-2023-03484).

\subsection*{Conflict of interest}

The authors have no relevant conflict of interest to disclose.

\bibliographystyle{plain}
\bibliography{references}

\end{document}